\title[de Rham Dolbeault Cohomology of  solvmanifolds]
{de Rham and Dolbeault Cohomology of   solvmanifolds with local systems}
\author{Hisashi Kasuya}
\theoremstyle{plain}
\theoremstyle{plain}
\theoremstyle{plain}
\theoremstyle{plain}
\newtheorem{theorem}{Theorem}[section] 
\theoremstyle{remark}
\newtheorem{remark}{Remark}
\theoremstyle{Main result}
\newtheorem{main result}{Main result}
\theoremstyle{lemma}
\newtheorem{lemma}[theorem]{Lemma}
\theoremstyle{definition}
\theoremstyle{proposition}
\newtheorem{proposition}[theorem]{Proposition}
\theoremstyle{corollary}
\newtheorem{corollary}[theorem]{Corollary}
\theoremstyle{remark}
\address[H.kasuya]{Graduate school of mathematical science university of tokyo japan }
\curraddr{}
\email{khsc@ms.u-tokyo.ac.jp}
\keywords{de Rham cohomology, local system, Lie algebra cohomology, Dolbeault Cohomology,  solvmanifold}
\subjclass[2010]{Primary:17B30, 17B56, 22E25, 53C30, Secondary:32M10,, 55N25,  58A12}
\newcommand{\C}{\mathbb{C}}
\newcommand{\R}{\mathbb{R}}
\newcommand{\Z}{\mathbb{Z}}
\newcommand{\g}{\frak{g}}
\newcommand{\n}{\frak{n}}
\begin{document} 

\maketitle
\begin{abstract}
Let $G$ be a simply connected solvable Lie group with a lattice $\Gamma$ and the Lie algebra $\g$ and a representation $\rho:G\to GL(V_{\rho})$ whose  restriction on the nilradical is unipotent.
Consider the flat bundle $E_{\rho}$ given by $\rho$. By using "many" characters $\{ \alpha\}$ of $G$ and "many" flat line bundles $\{E_{\alpha}\}$ over $G/\Gamma$,
we show that an isomorphism 
\[\bigoplus_{\{ \alpha\}} H^{\ast}(\g, V_{\alpha}\otimes V_{\rho})\cong \bigoplus_{\{E_{\alpha}\}} H^{\ast}(G/\Gamma, E_{\alpha}\otimes E_{\rho})\]
 holds.
This isomorphism is a generalization of the well-known fact:"If $G$ is nilpotent and $\rho$ is unipotent then, the isomorphism $H^{\ast}(\g,  V_{\rho})\cong H^{\ast}(G/\Gamma,  E_{\rho})$ holds".
By this result, we construct an explicit finite dimensional cochain complex which compute the cohomology $H^{\ast}(G/\Gamma,  E_{\rho})$ of solvmanifolds even if the isomorphism $H^{\ast}(\g,  V_{\rho})\cong H^{\ast}(G/\Gamma,  E_{\rho})$ does not hold.
For Dolbeault cohomology of complex parallelizable solvmanifolds, we also prove an analogue of the above isomorphism result
which is a generalization of  computations of Dolbeault cohomology of complex parallelizable nilmanifolds.
By this isomorphism, we construct an explicit finite dimensional cochain complex which compute the Dolbeault cohomology of  complex parallelizable solvmanifolds.

\end{abstract}
\section{Background and Main results}
\subsection{Background}
We have nice theorem for de rham cohomology of nilmanifolds with local systems.

\begin{theorem}[due to \cite{Nom} or \cite{R}]\label{nill}
Let $N$ be a simply connected real nilpotent Lie group and $\frak n$ the Lie algebra of $N$.
Suppose $N$ has a lattice $\Gamma$.
Let $\rho:N\to GL( V_{\rho})$ be a finite dimensional unipotent representation.
We define the flat bundle $E_{\rho}=(N\times V_{\rho})/\Gamma$ given by the equivalent relation $(\gamma g, \rho(\gamma)v)\cong (g, v)$ for $g\in N$, $v\in V_{\rho}$, $\gamma\in \Gamma$.
Consider  the cochain complex $\bigwedge {\frak n}^{\ast}_{\C}\otimes V_{\rho}$ of Lie algebra (see \cite{OV} )
amd the canonical inclusion
\[\bigwedge \n^{\ast}_{\C}\otimes V_{\rho}\to A^{\ast}(N/\Gamma, E_{\rho}).
\]
Then this inclusion induces a cohomology isomorphism
\[H^{\ast}(\n, V_{\rho})\cong H^{\ast}(N/\Gamma, E_{\rho}).
\]
\end{theorem}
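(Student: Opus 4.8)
The plan is to induct on $\dim N$, comparing the Hochschild--Serre spectral sequence computing $H^\ast(\n,V_\rho)$ with the Leray--Serre spectral sequence (with coefficients in $E_\rho$) of a suitable fibration of $N/\Gamma$ over the circle, the two being linked by the filtration-preserving inclusion of complexes in the statement.

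For the inductive step I would proceed as follows. Since $\n$ is nilpotent, $[\n,\n]\subsetneq\n$, and by Malcev's rationality theory for the lattice $\Gamma$ one can choose a codimension-one ideal $\frak h\supseteq[\n,\n]$ that is rational with respect to $\Gamma$. Let $H\subset N$ be the corresponding connected normal subgroup; then $H$ is simply connected nilpotent, $\Gamma':=H\cap\Gamma$ is a lattice in it, the product $H\Gamma$ is closed, and $\pi: N/\Gamma\to N/H\Gamma\cong S^1$ is a fiber bundle with fiber $F=H/\Gamma'$. Choosing $\gamma_0\in\Gamma$ whose image generates the rank-one lattice $\Gamma H/H\subset N/H\cong\R$, the total space $N/\Gamma$ is the mapping torus of the diffeomorphism of $F$ induced by conjugation $c_{\gamma_0}$. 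Since $\rho|_H$ is still unipotent, the inductive hypothesis applies to $(H,\Gamma',\rho|_H)$: the canonical inclusion $\bigwedge\frak h^\ast_{\C}\otimes V_\rho\to A^\ast(F,E_\rho|_F)$ induces an isomorphism $H^q(\frak h,V_\rho)\cong H^q(F,E_\rho|_F)$. Because this inclusion is natural in its data, the isomorphism intertwines the automorphism induced by $(c_{\gamma_0},\rho(\gamma_0))$ on the manifold side with the corresponding Lie-algebra automorphism on the other side.

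Both spectral sequences have only the columns $p=0,1$, because $\n/\frak h\cong\R$ and $\dim S^1=1$. On the Lie algebra side $E_2^{p,q}=H^p(\n/\frak h,H^q(\frak h,V_\rho))$ is the cohomology of the two-term complex $H^q(\frak h,V_\rho)\xrightarrow{\,\theta\,}H^q(\frak h,V_\rho)$, where $\theta$ is the action of a generator of $\n/\frak h$ on $H^q(\frak h,V_\rho)$; unipotence of $\rho$ and of $\mathrm{ad}$ makes $\theta$ nilpotent. On the manifold side $R^q\pi_\ast E_\rho$ is the local system on $S^1$ whose monodromy is the automorphism $T$ of $H^q(F,E_\rho|_F)$ induced by $(c_{\gamma_0},\rho(\gamma_0))$, so that $E_2^{p,q}=H^p(S^1,R^q\pi_\ast E_\rho)$ is the cohomology of $H^q(F,E_\rho|_F)\xrightarrow{\,T-\mathrm{id}\,}H^q(F,E_\rho|_F)$. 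Using that the connected group $H$ acts trivially on $H^\ast(\frak h,V_\rho)$ (the Lie derivatives $L_X$, $X\in\frak h$, being null-homotopic on the Chevalley--Eilenberg complex), one identifies $T$, under the inductive isomorphism, with $\exp(\theta)$ (up to normalizing the chosen generator of $\n/\frak h$). Since $\theta$ is nilpotent, $\exp(\theta)-\mathrm{id}=\theta\,U$ with $U$ invertible, so $\ker\theta=\ker(T-\mathrm{id})$ and $\mathrm{coker}\,\theta=\mathrm{coker}(T-\mathrm{id})$, and hence the inclusion induces an isomorphism of $E_2$-pages. Being a filtration-preserving map of complexes, it induces a morphism of spectral sequences compatible with the abutments, and the comparison theorem then yields $H^\ast(\n,V_\rho)\cong H^\ast(N/\Gamma,E_\rho)$. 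The base case $N=\R$, $\Gamma=\Z$ is precisely the identity $\ker\theta=\ker(e^\theta-\mathrm{id})$, $\mathrm{coker}\,\theta=\mathrm{coker}(e^\theta-\mathrm{id})$ for the nilpotent operator $\theta=d\rho(1)$.

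I expect the delicate point to be the precise identification of the monodromy $T$ of $R^q\pi_\ast E_\rho$ with $\exp(\theta)$: one has to match the holonomy of $E_\rho$ along the circle direction, together with the geometric monodromy $c_{\gamma_0}$ of the bundle, against the infinitesimal $\n/\frak h$-action on $H^q(\frak h,V_\rho)$, and, relatedly, one must check carefully that the inclusion of the statement really is filtration-preserving so that the naturality of both spectral sequences can be invoked. The existence and $\Gamma$-rationality of the ideal $\frak h$ is standard Malcev theory and can be cited.
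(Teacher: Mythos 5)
Your outline is correct, but note that the paper does not prove Theorem \ref{nill} at all: it is quoted as a known result of Nomizu and Raghunathan, so there is no in-paper proof to compare against. What you have written is essentially the classical inductive spectral-sequence comparison, and every step checks out: the rational codimension-one ideal $\frak{h}\supseteq[\n,\n]$ exists by Malcev theory, $\theta$ is nilpotent because it is induced by the commuting nilpotent operators $(\mathrm{ad}_{X_0})^{\ast}\otimes 1$ and $1\otimes d\rho(X_0)$ on $\bigwedge\frak{h}^{\ast}\otimes V_{\rho}$, the triviality of the inner $\frak{h}$-action on $H^{\ast}(\frak{h},V_{\rho})$ via $L_X=d\iota_X+\iota_X d$ legitimately identifies the monodromy with $\exp(\theta)$ (and $\log\gamma_0\equiv X_0 \bmod \frak{h}$ is harmless for the same reason), and $e^{\theta}-\mathrm{id}=\theta U$ with $U$ unipotent closes the $E_2$ comparison. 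The one direction worth flagging: the classical proofs you would be re-deriving tend to run the induction the other way, taking a one-dimensional rational \emph{central} subgroup so that $N/\Gamma$ is a circle bundle over a smaller nilmanifold; your version, which fibers over $S^1$ with nilmanifold fiber, is instead the shape of Hattori's argument and of the Mostow-bundle comparison this paper itself uses to prove Theorems \ref{drr} and \ref{doll}. Your choice has the advantage that the inductive hypothesis applied to the fiber is literally the statement being proved and the whole $E_2$ comparison collapses to the single linear-algebra identity $\ker\theta=\ker(e^{\theta}-\mathrm{id})$, $\mathrm{coker}\,\theta=\mathrm{coker}(e^{\theta}-\mathrm{id})$ for nilpotent $\theta$; the price is the two points you already isolated (filtration compatibility of the inclusion and the precise normalization of the monodromy), both of which are routine and are handled exactly as in the paper's Section 3.2.
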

Some researchers tried to extend Theorem \ref{nill} for solvmanifolds.
In fact it is proved that for a simply connected solvable Lie group $G$ with the Lie algebra $\g$ admitting a lattice $\Gamma$ and a representation $\rho:G\to GL( V_{\rho})$, if:

(H) (\cite{Hatt}) The representation $\rho\oplus{\rm Ad}$ is triangular or,

(M) (\cite{Mosc}) The two images $(\rho\oplus{\rm Ad})(G)$ and $(\rho\oplus{\rm Ad})(\Gamma)$ have  same Zariski-closure in $GL(V_{\rho})\times {\rm Aut}(\g_{\C})$,\\
then the isomorphism $H^{\ast}(\g, V_{\rho})\cong H^{\ast}(G/\Gamma, E_{\rho})$ holds.
However in general  the isomorphism $H^{\ast}(\g, V_{\rho})\cong H^{\ast}(N/\Gamma, E_{\rho})$ does not hold.

As an Analogue of Theorem \ref{nill} we have the following theorem for Dolbeault cohomology of complex parallelizable nilmanifolds.

\begin{theorem}[due to \cite{Sak}]\label{dolni}
Let $N$ be a simply connected complex nilpotent  Lie group and $\n$ the Lie algebra (as a complex Lie algebra) of $N$.
Suppose $N$ has a lattice $\Gamma$.
Let $\sigma:N\to GL(V_{\sigma})$ be a finite dimensional holomorphic unipotent representation.
We also consider the anti-holomorphic representation $\bar\sigma: N\to GL(V_{\bar\sigma})$.
Define the flat holomorphic vector bundle $L_{\bar\sigma}=(N\times V_{\bar\sigma})/\Gamma$ over $G/\Gamma$ given by the equivalent relation $(\gamma g, \bar \sigma(\gamma)v)\cong (g, v)$ for $g\in N$, $v\in V_{\sigma}$, $\gamma\in \Gamma$.
We consider the Dobeault complex $(A^{\ast,\ast}(N/\Gamma,L_{\bar\sigma}), \bar\partial)$.
We regard  $\bigwedge \n^{\ast}\otimes V_{\sigma}$ as the subcomplex  of $(A^{0,\ast}(N/\Gamma,L_{\bar\sigma}), \bar\partial)$ which consists of the left-invariant "anti"-holomorphic forms with values in $L_{\bar\sigma}$.
Then the inclusion
\[\bigwedge \n^{\ast}\otimes V_{\sigma} \to A^{0,\ast}(N/\Gamma,L_{\bar\sigma})
\]
induces a cohomology isomorphism
\[H^{\ast}(\n, V_{\sigma})\cong H^{0,\ast}_{\bar\partial}(N, L_{\bar\sigma}).
\]
Hence since $N/\Gamma$ is complex parallelizable, we have an isomorphism
\[\bigwedge \C^{\dim N}\otimes H^{\ast}(\n, V_{\sigma})\cong H^{\ast,\ast}_{\bar\partial}(N, L_{\bar\sigma}).
\] 
\end{theorem}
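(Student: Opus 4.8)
The plan is to establish the essential assertion --- that the inclusion $\bigwedge\n^{\ast}\otimes V_{\sigma}\to A^{0,\ast}(N/\Gamma,L_{\bar\sigma})$ of left-invariant anti-holomorphic forms is a quasi-isomorphism --- by induction on $\dim_{\C}N$, following the pattern of Nomizu's proof of Theorem \ref{nill} but with the de Rham complex replaced by the $\bar\partial$-complex and the central series of $\n$ replaced by a tower of holomorphic torus bundles. First I would make the complex concrete. A smooth section of $L_{\bar\sigma}$ is a map $f\colon N\to V_{\sigma}$ with $f(\gamma g)=\bar\sigma(\gamma)f(g)$, and under the global smooth trivialization $f\leftrightarrow \bar\sigma(g)^{-1}f(g)$ the operator $\bar\partial$ on $A^{0,\ast}(N/\Gamma,L_{\bar\sigma})$ becomes $\bar\partial+\beta\wedge(\,\cdot\,)$ on $V_{\sigma}$-valued $(0,\ast)$-forms on $N/\Gamma$, where $\beta=\bar\sigma^{-1}\bar\partial\bar\sigma$ is a \emph{left-invariant} $\mathfrak{gl}(V_{\sigma})$-valued $(0,1)$-form; hence the left-invariant $(0,\ast)$-forms form a subcomplex, which is the complex $\bigwedge\n^{\ast}\otimes V_{\sigma}$ of the statement and whose cohomology is $H^{\ast}(\n,V_{\sigma})$, and the map in the statement is its inclusion.

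The base of the induction is $N=\C^{k}$, so $N/\Gamma$ is a complex torus $T$ and $L_{\bar\sigma}$ is the flat bundle of a unipotent representation of the lattice. Since $\bar\sigma$ is unipotent it admits a filtration whose graded pieces are trivial, so $L_{\bar\sigma}$ is an iterated extension of copies of $\mathcal{O}_{T}$ and the left-invariant subcomplex is a compatible iterated extension of $\bigwedge\overline{\mathfrak{z}}^{\ast}$ with zero differential; as the inclusion of left-invariant forms is an isomorphism onto $H^{0,\ast}_{\bar\partial}(T,\mathcal{O}_{T})=\bigwedge\overline{\mathfrak{z}}^{\ast}$ for the trivial bundle (the classical Dolbeault computation for a torus), repeated use of the associated long exact sequences and the five lemma gives the claim for $T$.

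For the inductive step, let $\mathfrak{z}$ be the centre of $\n$; it is rational with respect to $\Gamma$, so the corresponding connected central subgroup $Z\cong\C^{k}$ meets $\Gamma$ in a lattice and $N/\Gamma$ is a holomorphic principal bundle over the lower-dimensional complex parallelizable nilmanifold $\bar{N}/\bar\Gamma$, with $\bar N=N/Z$, and fibre the complex torus $Z/(\Gamma\cap Z)$. I would filter $A^{0,\ast}(N/\Gamma,L_{\bar\sigma})$ by degree in the base directions; the resulting spectral sequence is the Borel (Leray--Hirzebruch) spectral sequence of this torus bundle with coefficients in $L_{\bar\sigma}$, whose $E_{1}$-page is the $\bar\partial$-complex of $\bar N/\bar\Gamma$ with values in the fibrewise Dolbeault cohomology --- which by the base case is the flat holomorphic bundle $\mathcal{H}^{q}=L_{\bar\tau_{q}}$ attached to the unipotent representation $\tau_{q}$ of $\bar N$ on $H^{q}(\mathfrak{z},V_{\sigma})$ --- so that $E_{2}^{p,q}=H^{0,p}_{\bar\partial}(\bar N/\bar\Gamma,\mathcal{H}^{q})$. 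The left-invariant subcomplex carries the analogous filtration by degree in the $\overline{\mathfrak{z}}^{\ast}$-directions, whose spectral sequence is the Hochschild--Serre spectral sequence of the ideal $\mathfrak{z}\subset\n$ with coefficients in $V_{\sigma}$, with $E_{2}^{p,q}=H^{p}(\bar{\mathfrak{n}},H^{q}(\mathfrak{z},V_{\sigma}))$, and the inclusion is a morphism of filtered complexes. On $E_{2}$ the induced map is, in fibre degree $q$, precisely the inclusion of left-invariant forms of the nilmanifold $\bar N/\bar\Gamma$ with coefficients in the unipotent bundle $L_{\bar\tau_{q}}$, hence an isomorphism by the inductive hypothesis; by the comparison theorem for spectral sequences (both filtrations are bounded) the inclusion is then an isomorphism on $E_{\infty}$ and therefore on cohomology. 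This comparison --- matching the Borel filtration with the Hochschild--Serre filtration and checking the $E_{2}$-identification $\mathcal{H}^{q}\cong L_{\bar\tau_{q}}$ for the appropriate holomorphic unipotent $\tau_{q}$ --- is the main technical point, and it is exactly here that unipotency of $\sigma$ is indispensable: for a non-unipotent coefficient the fibrewise cohomology would not be of the required type and the induction would break down.

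Finally, the bigraded statement follows at once from complex parallelizability: $\Omega^{p}_{N/\Gamma}\cong\mathcal{O}_{N/\Gamma}^{\binom{\dim N}{p}}$, so $H^{p,q}_{\bar\partial}(N/\Gamma,L_{\bar\sigma})\cong\bigwedge^{p}\C^{\dim N}\otimes H^{0,q}_{\bar\partial}(N/\Gamma,L_{\bar\sigma})\cong\bigwedge^{p}\C^{\dim N}\otimes H^{q}(\n,V_{\sigma})$, and summing over $p$ gives the asserted isomorphism $\bigwedge\C^{\dim N}\otimes H^{\ast}(\n,V_{\sigma})\cong H^{\ast,\ast}_{\bar\partial}(N/\Gamma,L_{\bar\sigma})$.
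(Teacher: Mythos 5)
Your argument is correct, but it takes a genuinely different (and much longer) route than the paper. The paper treats the trivial-coefficient case as known --- it is exactly Sakane's theorem, which is why the statement is attributed to \cite{Sak} --- and then disposes of the unipotent coefficients in a few lines: since $\sigma$ is unipotent there is a submodule $V_{\sigma'}\subset V_\sigma$ with trivial quotient, giving compatible short exact sequences of the left-invariant complexes and of the Dolbeault complexes $0\to A^{0,\ast}(N/\Gamma,L_{\bar\sigma'})\to A^{0,\ast}(N/\Gamma,L_{\bar\sigma})\to A^{0,\ast}(N/\Gamma,L_{\bar\sigma}/L_{\bar\sigma'})\to 0$, and the five lemma applied to the two long exact sequences finishes the induction on $\dim V_\sigma$. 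You instead re-prove the whole statement, Sakane's theorem included, by induction on $\dim_\C N$ via the central torus fibration and a comparison of the Borel spectral sequence with the Hochschild--Serre spectral sequence of $\mathfrak{z}\subset\n$; this is essentially Sakane's original argument upgraded to unipotent coefficients, with your torus base case playing the role of the paper's Lemma on tori. What your route buys is a self-contained proof and a preview of the mechanism the paper uses later for the Mostow bundle in the proof of its main Dolbeault theorem; what it costs is that the step you yourself flag as the main technical point --- the rationality of the center, the identification of the fibrewise cohomology bundle with $L_{\bar\tau_q}$ for a holomorphic unipotent $\tau_q$, and the matching of the two filtrations on the $E_1$/$E_2$ pages --- is precisely the content of Borel's and Sakane's work and is left as an assertion rather than carried out, whereas the paper's reduction avoids it entirely by citing \cite{Sak}. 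Your observation that unipotency of $\sigma$ forces $\tau_q$ to be unipotent (since $Z$ acts trivially on $H^{q}(\mathfrak{z},V_\sigma)$ and the $N$-action is a subquotient of the unipotent action on $\bigwedge^q\mathfrak{z}^\ast\otimes V_\sigma$) is the right justification for the inductive hypothesis to apply, and the final bigraded statement via $\Omega^p_{N/\Gamma}\cong\mathcal{O}_{N/\Gamma}^{\binom{\dim N}{p}}$ agrees with the paper.
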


It is desired that Theorem \ref{nill} and \ref{dolni} are generalized for solvmanifolds and we can compute the de Rham and Dolbeault cohomology of solvmanifolds even if the isomorphism  $H^{\ast}(\g, V_{\rho})\cong H^{\ast}(G/\Gamma, E_{\rho})$  (resp. $\bigwedge \C^{\dim N}\otimes H^{\ast}(\n, V_{\sigma})\cong H^{\ast,\ast}_{\bar\partial}(N, L_{\bar\sigma})$) does not holds.

\subsection{Main results}
The first purpose of this paper is to show new-type cohomology isomorphism theorems for solvmanifolds which are generalizations of Theorem \ref{nill} and \ref{dolni}.
These analogous each other.
We consider the "many" characters of $G$ and "many" line bundles over $G/\Gamma$.
In this paper we prove:
\begin{theorem}\label{drr}
Let $G$ be a simply connected  real solvable Lie group with a lattice $\Gamma$ and $\g$ the Lie algebra of $G$.
Let $N$ be the nilradical (i.e. maximal connected nilpotent normal subgroup)  of $G$.
Let ${\mathcal A}_{(G,N)}=\{\alpha\in {\rm Hom}(G,\C^{\ast}) \vert \alpha_{\vert_{N}}=1\}$
and  ${\mathcal A}_{(G,N)}(\Gamma)$ the set $\{E_{\alpha}\}$ of all the isomorphism classes of flat line bundles given by $\{V_{\alpha}\}_{\alpha\in {\mathcal A}_{(G,N)}}$.
Let $\rho:G\to GL(V_{\rho})$ be a representation.
For the nilradical $N$ of $G$, we assume that the restriction $\rho_{\vert_{N}}$ is a unipotent representation.
We consider the direct sum
\[\bigoplus_{\alpha\in {\mathcal A}_{(G,N)}}\bigwedge {\frak g}^{\ast}_{\C}\otimes V_{\alpha}\otimes V_{\rho}\]
of the Lie algebra cochain complexes.
We also consider the direct sum
\[\bigoplus_{E_{\alpha}\in {\mathcal A}_{(G,N)}(\Gamma)}A^{\ast}(G/\Gamma, E_{\alpha}\otimes E_{\rho}).
\]
Then the inclusion 
\[\bigoplus_{\alpha\in {\mathcal A}_{(G,N)}}\bigwedge {\frak g}^{\ast}_{\C}\otimes V_{\alpha}\otimes V_{\rho}\to \bigoplus_{E_{\alpha}\in {\mathcal A}_{(G,N)}(\Gamma)}A^{\ast}(G/\Gamma, E_{\alpha}\otimes E_{\rho}).\]
induces a cohomology isomorphism
\[\bigoplus_{\alpha\in {\mathcal A}_{(G,N)}}H^{\ast}(\g, V_{\alpha}\otimes V_{\rho})\cong H^{\ast}\cong \bigoplus_{E_{\alpha}\in {\mathcal A}_{(G,N)}(\Gamma)} H^{\ast}(G/\Gamma, E_{\alpha}\otimes E_{\rho}).
\]
\end{theorem}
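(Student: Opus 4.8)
The plan is to squeeze, between the two direct sums in the statement, an explicit finite-dimensional cochain complex $B^{\ast}$, to identify $H^{\ast}(B^{\ast})$ with the ``$H^{\ast}$'' in the middle of the displayed isomorphism, and to show that the inclusion of the Lie algebra complexes factors, up to quasi-isomorphism, through $B^{\ast}\hookrightarrow\bigoplus_{E_{\alpha}}A^{\ast}(G/\Gamma,E_{\alpha}\otimes E_{\rho})$. For the set-up: since $\g$ is solvable, $[\g,\g]\subseteq\n$, so $G/N$ is simply connected abelian, hence $G/N\cong\R^{k}$; and since $N$ is nilpotent, $\mathrm{Ad}(N)$ is unipotent, while $\rho(N)$ is unipotent by hypothesis. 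So the semisimple parts $\mathrm{Ad}_{s},\rho_{s}$ of the Jordan decompositions are homomorphisms $G\to GL(\g_{\C})$, $G\to GL(V_{\rho})$, trivial on $N$, factoring through $\R^{k}$, and simultaneously diagonalizable; thus $\psi:=\big(\bigwedge\mathrm{Ad}^{\ast}_{s}\big)\otimes\rho_{s}$ on $\bigwedge\g^{\ast}_{\C}\otimes V_{\rho}$ is diagonalizable with a finite set of weight characters lying in ${\mathcal A}_{(G,N)}$, and I write $\bigwedge\g^{\ast}_{\C}\otimes V_{\rho}=\bigoplus_{\chi}W_{\chi}$ for the weight decomposition. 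Since $\psi(g)$ is the semisimple part of $\big(\bigwedge\mathrm{Ad}^{\ast}\otimes\rho\big)(g)$, hence a polynomial in it, it commutes with the Chevalley--Eilenberg differential of any twist, and so $d_{CE}^{V_{\beta}\otimes V_{\rho}}$ preserves each $W_{\chi}$ for every $\beta\in{\mathcal A}_{(G,N)}$.

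Next I would build the model and match the left-hand side. For a weight $\chi$, multiplying the left-invariant forms in $W_{\chi}$ by the character function $g\mapsto\chi(g)^{-1}$ and then $\rho$-unwinding ($v\mapsto\rho(g)v$) lands inside $A^{\ast}(G/\Gamma,E_{\chi^{-1}}\otimes E_{\rho})$ and yields a subcomplex $U_{\chi^{-1}\rho}(W_{\chi})\cong\big(W_{\chi},d_{CE}^{V_{\chi^{-1}}\otimes V_{\rho}}|_{W_{\chi}}\big)$; set $B^{\ast}:=\bigoplus_{\chi}U_{\chi^{-1}\rho}(W_{\chi})$. The key lemma is that $\big(W_{\chi'},d_{CE}^{V_{\chi^{-1}}\otimes V_{\rho}}|_{W_{\chi'}}\big)$ is acyclic for $\chi'\neq\chi$: from the twisted Cartan identity $\iota_{X}d_{CE}^{V_{\chi^{-1}}\otimes V_{\rho}}+d_{CE}^{V_{\chi^{-1}}\otimes V_{\rho}}\iota_{X}=L_{X}-(d\log\chi)(X)$ for $X\in\g$ (using that $d\log\chi$ is $\mathrm{Ad}$-invariant), the group $G$ acts on $H^{\ast}(\g,V_{\chi^{-1}}\otimes V_{\rho})$ through the character $\chi$, whereas $\psi(g)$ acts on $H^{\ast}\big(W_{\chi'},d_{CE}^{V_{\chi^{-1}}\otimes V_{\rho}}\big)$ by $\chi'(g)$; comparing semisimple parts forces $\chi'=\chi$ whenever the latter group is nonzero. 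The same computation shows $H^{\ast}(\g,V_{\alpha}\otimes V_{\rho})=0$ unless $\alpha^{-1}$ is a weight of $\psi$. Hence $H^{\ast}(B^{\ast})\cong\bigoplus_{\chi}H^{\ast}(\g,V_{\chi^{-1}}\otimes V_{\rho})\cong\bigoplus_{\alpha\in{\mathcal A}_{(G,N)}}H^{\ast}(\g,V_{\alpha}\otimes V_{\rho})$, and because the part of each $\bigwedge\g^{\ast}_{\C}\otimes V_{\alpha}\otimes V_{\rho}$ supported on the other weight spaces is acyclic, this isomorphism is exactly the one induced by the inclusion of the statement.

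To match the right-hand side I would prove that $B^{\ast}\hookrightarrow\bigoplus_{E_{\alpha}}A^{\ast}(G/\Gamma,E_{\alpha}\otimes E_{\rho})$ is a quasi-isomorphism, using the fibre-bundle structure. By a theorem of Mostow, $\Gamma_{N}:=\Gamma\cap N$ is a lattice in $N$ and $L:=\Gamma N/N$ is a lattice in $G/N=\R^{k}$, so $G/\Gamma$ fibres as $N/\Gamma_{N}\to G/\Gamma\to T^{k}:=\R^{k}/L$. Filtering $A^{\ast}(G/\Gamma,E_{\beta}\otimes E_{\rho})$ by the number of cotangent directions along $T^{k}$ gives a spectral sequence with $E_{2}^{p,q}=H^{p}(T^{k};\mathcal{H}^{q})$; by Theorem~\ref{nill} (valid since $\rho|_{N}$ is unipotent and $E_{\beta}$ restricts trivially to the fibre), $\mathcal{H}^{q}=H^{q}(\n,V_{\rho})$, viewed as a local system on $T^{k}$ whose monodromy is $\beta|_{L}$ times the $L$-action on $H^{q}(\n,V_{\rho})$ — which extends to a representation of $\R^{k}$, since $N$ acts trivially on $H^{q}(\n,V_{\rho})$. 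Then $H^{p}(T^{k};\mathcal{H}^{q})$ is carried exactly by the weight characters $\delta$ of that $\R^{k}$-representation with $\delta|_{L}=1$, i.e.\ by the weights $\nu$ of the $\R^{k}$-action on $H^{q}(\n,V_{\rho})$ with $E_{\nu^{-1}}\cong E_{\beta}$. The $\n$-filtration induces a compatible filtration on $B^{\ast}$, and the functions $\chi^{-1}$ put into $B^{\ast}$ are chosen precisely so that the induced map of spectral sequences is an isomorphism on $E_{2}$; the comparison theorem for spectral sequences of filtered complexes then gives the quasi-isomorphism. (On the algebraic side the Hochschild--Serre spectral sequence of $\n\triangleleft\g$ has $E_{2}^{p,q}=H^{p}\big(\g/\n,\,V_{\alpha}\otimes H^{q}(\n,V_{\rho})\big)$, which is nonzero only when $\alpha=\nu^{-1}$ for an \emph{actual} weight $\nu$ — this is why one must sum over the whole family $\{\alpha\}$.)

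The main obstacle is this last comparison over the torus base. The point is that a character of $\R^{k}$ which is nontrivial but trivial on the lattice $L$ creates de~Rham cohomology of $T^{k}$ that left-invariant forms do not detect; recovering exactly those classes is what forces the bookkeeping with all of ${\mathcal A}_{(G,N)}$ on the Lie algebra side and with the isomorphism classes ${\mathcal A}_{(G,N)}(\Gamma)$ (characters modulo restriction to $\Gamma$) on the manifold side. Concretely, the technical heart is to verify that $B^{\ast}$ contains, for each $\beta$ and each weight $\nu$ with $E_{\nu^{-1}}\cong E_{\beta}$, precisely the coefficient functions needed to realize $H^{p}(T^{k};\mathcal{H}^{q})$, and that the resulting $E_{2}$-isomorphism is compatible with the Lie-algebra one; once this is established, both outer direct sums are identified with $H^{\ast}(B^{\ast})$ and the theorem follows.
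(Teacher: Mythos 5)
Your overall architecture (reduce along the Mostow fibration $N/\Gamma\cap N\to G/\Gamma\to (G/N)/(\Gamma/\Gamma\cap N)$ and compare spectral sequences over the torus base) is the same as the paper's, but two of your steps are genuinely problematic. The first is your very first construction: the weight decomposition $\bigwedge\g^{\ast}_{\C}\otimes V_{\rho}=\bigoplus_{\chi}W_{\chi}$ with respect to $\psi(g)=\bigl(\bigwedge{\rm Ad}^{\ast}_{s}\otimes\rho_{s}\bigr)(g)$ for \emph{all} $g\in G$ does not exist in general. For a solvable non-nilpotent $G$ the map $g\mapsto({\rm Ad}_{g})_{s}$ is not a homomorphism and the operators $({\rm Ad}_{g})_{s}$ need not commute, so they admit no simultaneous eigenspace decomposition. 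Already for $\g=\langle T,X,Y\rangle$ with $[T,X]=X$, $[T,Y]=-Y$ (the real form of the paper's own example), ${\rm Ad}_{\exp(tT)\exp(sX)}$ is semisimple for $t\neq 0$ with $1$-eigenvector $T+\tfrac{se^{t}}{e^{t}-1}X$, which moves with $s$; hence the simultaneous weight space for the trivial character is zero and the $W_{\chi}$ do not span $\g_{\C}$. The eigenvalue characters themselves are well defined and lie in ${\mathcal A}_{(G,N)}$, but your complex $B^{\ast}$ is not defined. The repair is the paper's device in Section 5: take a simply connected nilpotent subgroup $C\subset G$ with $G=C\cdot N$ (Proposition \ref{Cde}), where Lemma \ref{jse} guarantees that $c\mapsto\Phi(c)$ is a diagonalizable homomorphism, and use ${\mathcal A}_{(G,N)}={\mathcal A}_{(C,C\cap N)}$ via $G/N=C/C\cap N$. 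Your argument must be routed through such a $C$.

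The second and more serious issue is that the step you yourself flag as ``the technical heart'' --- that the map of Mostow spectral sequences is an isomorphism on $E_{2}$ --- is precisely the content of the theorem in the abelian base case, and you do not prove it. After the $E_{1}$-identification, what must be shown is: for $A=G/N$, $\Delta=\Gamma/\Gamma\cap N$ and the induced representation $\Lambda_{q}$ of $A$ on $H^{q}(\n,V_{\rho})$, the inclusion $\bigoplus_{\alpha\in{\rm Hom}(A,\C^{\ast})}\bigwedge\frak{a}^{\ast}_{\C}\otimes V_{\alpha}\otimes V_{\Lambda_{q}}\to\bigoplus_{E_{\alpha}}A^{\ast}(A/\Delta,E_{\alpha}\otimes E_{\Lambda_{q}})$ is a quasi-isomorphism. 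This is Proposition \ref{abisR}, and it requires real input: the decomposition $\Lambda_{q}=\bigoplus\alpha_{i}\otimes\phi_{i}$ into character-times-unipotent pieces (Lemma \ref{ddecc}), Nomizu's theorem for the unipotent pieces, and the two vanishing statements of Lemma \ref{aaag1} (a nontrivial character has vanishing Lie algebra cohomology; a nontrivial flat line bundle on a torus has vanishing de Rham cohomology, which is where the dichotomy ``$\alpha$ nontrivial but $\alpha|_{\Delta}$ trivial'' is actually resolved). Your statement that $B^{\ast}$ contains ``precisely the coefficient functions needed'' is a restatement of what must be proved, not a proof. Note finally that once this base case is available, the two infinite direct sums can be compared directly at the $E_{1}$ level, as the paper does; the finite-dimensional model $B^{\ast}$ is not needed for Theorem \ref{drr} and appears in the paper only afterwards (Theorem \ref{finnn}) as a consequence of it.
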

We also prove:
\begin{theorem}\label{doll}
Let $G$ be a simply connected  complex solvable Lie group with a lattice $\Gamma$ and $\g$ the Lie algebra (as a complex Lie algebra) of $G$.
Let $N$ be the nilradical of $G$.
Let ${\mathcal B}_{(G,N)}=\{\alpha\in {\rm Hom}_{hol}(G,\C^{\ast}) \vert \alpha_{\vert_{N}}=1\}$
and  ${\mathcal B}_{(G,N)}(\Gamma)$ the set $\{L_{\bar\alpha}\}$ of all the isomorphism classes of holomorphic line bundles given by $\{V_{\bar\alpha}\}_{\alpha\in {\mathcal B}_{(G,N)}}$.
Let $\sigma:G\to GL(V_{\sigma})$ be a holomorphic  representation.
For the nilradical $N$ of $G$, we assume that the restriction $\sigma_{\vert_{N}}$ is a unipotent representation.
We consider the direct sum
\[\bigoplus_{\alpha\in{\mathcal B}_{(G,N)}} \bigwedge \g^{\ast}\otimes V_{\alpha}\otimes V_{\sigma}.
\]
of the Lie algebra cochain complexes.
We also consider the direct sum
\[\bigoplus_{L_{\bar\alpha}\in{\mathcal B}_{(G,N)}(\Gamma)} A^{0,\ast}(G/\Gamma, L_{\bar\alpha}\otimes L_{\bar\sigma})
\]
of Dolbeault complexes.

Then the inclusion
\[\bigoplus_{\alpha\in{\mathcal B}_{(G,N)}} \bigwedge \g^{\ast}\otimes V_{\alpha}\otimes V_{\sigma}\to \bigoplus_{L_{\bar\alpha}\in{\mathcal B}_{(G,N)}(\Gamma)} A^{0,\ast}(G/\Gamma, L_{\bar\alpha}\otimes L_{\bar\sigma})\]
induces a cohomology isomorphism
\[\bigoplus_{\alpha\in{\mathcal B}_{(G,N)}} H^{\ast}(\g, V_{\alpha}\otimes V_{\sigma})\cong \bigoplus_{L_{\bar\alpha}\in{\mathcal B}_{(G,N)}(\Gamma)}
H^{0,\ast}(G/\Gamma, L_{\bar\alpha}\otimes L_{\bar\sigma}).
\]
\end{theorem}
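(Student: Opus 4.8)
The plan is to run the Dolbeault counterpart of the proof of Theorem~\ref{drr}. Because $G/\Gamma$ is complex parallelizable, for each $\alpha$ the space $A^{0,q}(G/\Gamma,L_{\bar\alpha}\otimes L_{\bar\sigma})$ consists of the $\Gamma$-equivariant $V_{\bar\alpha}\otimes V_{\bar\sigma}$-valued $(0,q)$-forms on $G$, its left-invariant part being $\bigwedge^{q}\g^{\ast}\otimes V_{\alpha}\otimes V_{\sigma}$ (identifying $\bigwedge\g^{\ast}$ with the left-invariant anti-holomorphic forms on the complex group $G$, as in Theorem~\ref{dolni}) with $\bar\partial$ restricting to the Chevalley--Eilenberg differential of the $\g$-module $V_{\alpha}\otimes V_{\sigma}$; the inclusion of the statement sends a left-invariant $\omega\otimes v$ to $\omega$ wedged with the section $g\mapsto\overline{(\alpha\sigma)(g)}\,v$, and it is injective on forms because distinct characters of $G$ are linearly independent. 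First I would reduce to $V_{\sigma}=\C$: as $\sigma_{\vert_{N}}$ is unipotent, $\n\cdot V_{\sigma}$ is a proper $\g$-submodule (it is $\g$-stable since $[\g,\n]\subseteq\n$), so iterating one obtains a filtration of $V_{\sigma}$ by $\g$-submodules whose subquotients have trivial $\n$-action, hence factor through the abelian Lie algebra $\g/\n$, and after decomposing into generalized eigenspaces these subquotients are one-dimensional, given by characters $\lambda$; each such $\lambda$ is absorbed by the substitution $\alpha\mapsto\alpha\lambda$, a bijection of the index sets on both sides, while a short exact sequence of $G$-modules yields short exact sequences of Chevalley--Eilenberg and of Dolbeault complexes, hence a commuting ladder of long exact sequences, so the five lemma finishes the reduction.

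Assuming $V_{\sigma}=\C$, I would exploit the holomorphic fibration $N/(\Gamma\cap N)\to G/\Gamma\to T$, where $T:=(G/N)/(\Gamma N/N)$ is a complex torus (using that $G/N$ is an abelian complex Lie group) and the fibre is a complex parallelizable nilmanifold; since $\alpha_{\vert_{N}}=1$, each $L_{\bar\alpha}$ is pulled back from $T$. Applying Theorem~\ref{dolni} along the fibres, the fibrewise left-invariant anti-holomorphic forms form a subcomplex whose inclusion is a fibrewise quasi-isomorphism compatible with the structure group of the fibration, so a Mostow-type comparison of the Borel spectral sequences upgrades it to a quasi-isomorphism of complexes on $G/\Gamma$. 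Passing to $\bar\partial$-cohomology first along the fibres (that is, replacing $\bigwedge\n^{\ast}$ by $H^{\ast}(\n)$), the computation of $\bigoplus_{L_{\bar\alpha}} H^{0,\ast}(G/\Gamma,L_{\bar\alpha})$ reduces to that of $\bigoplus_{L_{\bar\alpha}} H^{0,\ast}\bigl(T,\,{\mathcal H}\otimes L_{\bar\alpha}\bigr)$, where ${\mathcal H}$ is the flat bundle on $T$ underlying $H^{\ast}(\n)$ with the conjugate adjoint action.

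The crux is the computation on $T$. The semisimple part $\mathrm{Ad}_{s}\colon G\to GL(\g)$ of the adjoint representation is holomorphic, trivial on $N$, and diagonalizable, so its weights --- and the weights it induces on $H^{\ast}(\n)$ --- form a finite set of characters along which ${\mathcal H}$ splits into flat line bundles, the summand of weight $\beta'$ twisted by $L_{\bar\alpha}$ being the flat line bundle on $T$ with monodromy $\overline{\beta'\alpha}_{\vert_{\Lambda_{0}}}$, where $\Lambda_{0}$ is the lattice of $T$. On a complex torus a flat line bundle has non-vanishing Dolbeault cohomology precisely when it is holomorphically trivial --- equivalently, when its monodromy is the restriction of a holomorphic character --- in which case the cohomology is $\bigwedge \C^{\dim T}$. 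Feeding this in weight by weight and comparing with the Hochschild--Serre description of $H^{\ast}(\g,V_{\alpha})$ built from $H^{\ast}(\n)$ and $\bigwedge(\g/\n)^{\ast}$, whose weight-$\beta'$ contribution survives exactly when $\beta'\alpha$ is trivial, one matches the two direct sums term by term; restricted to the $\alpha$-summand the inclusion of the statement names, for each left-invariant form of weight $\beta'$, precisely the holomorphic section realizing the corresponding class on the right, so it induces the asserted isomorphism.

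The hard part is this final matching: after pushing down, one must identify the Dolbeault cohomology on $T$ as a direct sum whose nonzero summands are indexed compatibly with the left-hand side, and verify that the explicit inclusion --- not merely some abstract isomorphism --- realizes it. A second delicate point is the uniformity over $T$ used in the second step, namely that the invariant subcomplex furnished by Theorem~\ref{dolni} is preserved by the structure group of the fibration, so that the fibrewise quasi-isomorphism globalizes.
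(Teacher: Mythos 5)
Your proposal follows essentially the same route as the paper: compare the Hochschild--Serre spectral sequence of $\n\subset\g$ with the Borel spectral sequence of the Mostow fibration, apply the nilmanifold result (Theorem \ref{dolni}) fibrewise, and settle the resulting comparison on the complex torus by decomposing the coefficient representation into characters and using that a nontrivial flat holomorphic line bundle on a complex torus has vanishing Dolbeault cohomology. The only differences are cosmetic: the paper performs the character decomposition at the torus level (Lemmas \ref{aaag} and \ref{DEC}, packaged as Proposition \ref{abis}) instead of first reducing to $V_{\sigma}=\C$, and the bundle you call ${\mathcal H}$ splits in general only into blocks of the form (character)\,$\otimes$\,(unipotent) rather than into flat line bundles --- a harmless imprecision, since the vanishing statement is proved for exactly such blocks by induction.
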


\begin{remark}
The correspondence ${\mathcal A}_{(G,N)}\to {\mathcal A}_{(G,N)}(\Gamma)$ (resp. ${\mathcal B}_{(G,N)}\to {\mathcal B}_{(G,N)}(\Gamma)$) is not $1$ to $1$.
This remark is very important for the case the isomorphism $H^{\ast}(\g, V_{\rho})\cong H^{\ast}(G/\Gamma, E_{\rho})$ (resp. $H^{\ast}(\g, V_{\sigma})\cong H^{0,\ast}(G/\Gamma, L_{\bar\sigma})$) does not hold.
\end{remark}

The second purpose of this paper is to construct a explicit finite dimensional cochain complex which compute the de Rham cohomology $H^{\ast}(G/\Gamma, E_{\rho})$ and the Dolbeault cohomology $H^{0,\ast}(G/\Gamma, L_{\bar\sigma})$ by using  Theorem \ref{drr}
and \ref{doll}.
We prove:
\begin{theorem}\label{finnn}
Let $G$ be a simply connected  real (resp complex) solvable Lie group and $\g$ the Lie algebra of $G$.
Define  ${\mathcal A}_{(G,N)}$ (resp. ${\mathcal B}_{(G,N)}$) as in Theorem \ref{drr} (resp Theorem  \ref{doll}).
Let $\rho:G\to GL(V_{\rho})$ (resp. $\sigma:G\to GL(V_{\sigma})$) be a representation with the assumption of Theorem \ref{drr} (resp Theorem  \ref{doll}).
We consider the direct sum
\[\bigoplus_{\alpha\in {\mathcal A}_{(G,N)}}\bigwedge {\frak g}^{\ast}_{\C}\otimes V_{\alpha}\otimes V_{\rho}\]
(resp. \[\bigoplus_{\alpha\in{\mathcal B}_{(G,N)}} \bigwedge \g^{\ast}\otimes V_{\alpha}\otimes V_{\sigma}\]
) of the Lie algebra cochain complexes.

Then there exists a finite dimensional subcomplex 
\[A^{\ast}\subset \bigoplus_{\alpha\in {\mathcal A}_{(G,N)}}\bigwedge {\frak g}^{\ast}_{\C}\otimes V_{\alpha}\otimes V_{\rho}\]
(resp. \[B^{\ast}\subset \bigoplus_{\alpha\in{\mathcal B}_{(G,N)}} \bigwedge \g^{\ast}\otimes V_{\alpha}\otimes V_{\sigma}\]
)
such that the inclusion induces a cohomology isomorphism.
\end{theorem}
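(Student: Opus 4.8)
The plan is to isolate, inside the infinite direct sum of Lie algebra complexes, the finitely many summands that carry nonzero cohomology. Note first that the Chevalley--Eilenberg differential on $\bigoplus_{\alpha\in {\mathcal A}_{(G,N)}}\bigwedge \g^{\ast}_{\C}\otimes V_{\alpha}\otimes V_{\rho}$ preserves each summand $\bigwedge \g^{\ast}_{\C}\otimes V_{\alpha}\otimes V_{\rho}$, which is therefore a finite-dimensional subcomplex (since $\dim\bigwedge\g^{\ast}_{\C}=2^{\dim G}<\infty$) with cohomology $H^{\ast}(\g,V_{\alpha}\otimes V_{\rho})$. Hence the whole statement reduces to the claim
\[(\star)\qquad H^{\ast}(\g,V_{\alpha}\otimes V_{\rho})=0\quad\text{for all but finitely many}\ \alpha\in {\mathcal A}_{(G,N)}.\]
Granting $(\star)$, let $S\subset{\mathcal A}_{(G,N)}$ be the finite set of $\alpha$ with $H^{\ast}(\g,V_{\alpha}\otimes V_{\rho})\neq0$ and put $A^{\ast}:=\bigoplus_{\alpha\in S}\bigwedge \g^{\ast}_{\C}\otimes V_{\alpha}\otimes V_{\rho}$; this is a finite-dimensional subcomplex whose complementary subcomplex $\bigoplus_{\alpha\notin S}\bigwedge \g^{\ast}_{\C}\otimes V_{\alpha}\otimes V_{\rho}$ is acyclic, so the inclusion of $A^{\ast}$ into the big complex (which is the direct sum of these two) is a quasi-isomorphism. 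The holomorphic case is identical, with $B^{\ast}:=\bigoplus_{\alpha\in T}\bigwedge\g^{\ast}\otimes V_{\alpha}\otimes V_{\sigma}$ for the corresponding finite set $T\subset{\mathcal B}_{(G,N)}$.

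To prove $(\star)$ I would run the Hochschild--Serre spectral sequence of the extension $0\to\n\to\g\to\g/\n\to0$ with coefficients in $V_{\alpha}\otimes V_{\rho}$,
\[E_2^{p,q}=H^p\bigl(\g/\n,\,H^q(\n,V_{\alpha}\otimes V_{\rho})\bigr)\Longrightarrow H^{p+q}(\g,V_{\alpha}\otimes V_{\rho}),\]
a first-quadrant spectral sequence since all the spaces involved are finite-dimensional. Because $\alpha_{\vert_{N}}=1$ and $N$ is connected, $d\alpha$ kills $\n$, so $\n$ acts trivially on $V_{\alpha}$ and there is an isomorphism $H^q(\n,V_{\alpha}\otimes V_{\rho})\cong V_{\alpha}\otimes H^q(\n,V_{\rho})$ of modules over the abelian Lie algebra $\mathfrak{a}:=\g/\n$, in which $V_{\alpha}$ is the one-dimensional $\mathfrak{a}$-module of weight $d\bar\alpha\in\mathfrak{a}^{\ast}$ ($\bar\alpha:G/N\to\C^{\ast}$ being the character induced by $\alpha$).

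The key point is the following vanishing for an abelian Lie algebra $\mathfrak{a}$: if $M=\bigoplus_{\mu}M_{\mu}$ is a finite-dimensional module with generalized weight decomposition over $\C$, then $H^{\ast}(\mathfrak{a},M)=H^{\ast}(\mathfrak{a},M_0)$. Indeed, on the Chevalley--Eilenberg complex of the one-dimensional module $\C_{\mu}$ the differential is, up to sign, exterior multiplication $\mu\wedge(-)$ on $\bigwedge\mathfrak{a}^{\ast}$, which is exact when $\mu\neq0$ (contract with any $X$ with $\mu(X)=1$); one then passes from $\C_{\mu}$ to a general $M_{\mu}$ with $\mu\neq0$ by filtering it by $\mathfrak{a}$-submodules with successive quotients $\C_{\mu}$ and using the long exact cohomology sequences. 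Applying this to $M=V_{\alpha}\otimes H^q(\n,V_{\rho})$, whose weights are the numbers $d\bar\alpha+\mu$ with $\mu$ a weight of the finite-dimensional $\mathfrak{a}$-module $H^q(\n,V_{\rho})$, we see that $E_2^{p,q}=0$ for all $p,q$ — hence $H^{\ast}(\g,V_{\alpha}\otimes V_{\rho})=0$ — unless $d\bar\alpha$ lies in the finite set $-W$, where $W:=\bigcup_q\{\text{weights of }H^q(\n,V_{\rho})\}$. Finally, since $G$ is simply connected solvable, $G/N$ is simply connected with abelian Lie algebra, hence $G/N\cong\R^{k}$; a continuous homomorphism $\R^{k}\to\C^{\ast}$ has the form $x\mapsto\exp(\ell(x))$ with $\ell$ $\R$-linear, and $\ell=d\bar\alpha$ determines $\bar\alpha$, so $\alpha\mapsto d\bar\alpha$ is injective on ${\mathcal A}_{(G,N)}$ and only finitely many $\alpha$ can satisfy $d\bar\alpha\in-W$. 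This proves $(\star)$. The holomorphic case of Theorem \ref{doll} is verbatim the same, with complex Lie algebra cohomology throughout, $G/N\cong\C^{k}$, and holomorphic characters $x\mapsto\exp(\ell(x))$ with $\ell$ $\C$-linear. I expect the only delicate point to be the bookkeeping of the $\mathfrak{a}$-module structures in the spectral sequence — in particular the $\mathfrak{a}$-equivariant identification $H^q(\n,V_{\alpha}\otimes V_{\rho})\cong V_{\alpha}\otimes H^q(\n,V_{\rho})$; note that the unipotence of $\rho_{\vert_{N}}$ (resp. $\sigma_{\vert_{N}}$) is used only to place us in the setting of Theorems \ref{drr} and \ref{doll}, whereas $(\star)$ needs merely that $H^q(\n,V_{\rho})$ be finite-dimensional.
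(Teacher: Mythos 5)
Your proposal is correct, but it takes a genuinely different route from the paper's. You keep each summand $\bigwedge\g^{\ast}_{\C}\otimes V_{\alpha}\otimes V_{\rho}$ intact and show, via the Hochschild--Serre spectral sequence of $0\to\n\to\g\to\g/\n\to 0$ and the weight-vanishing criterion for cohomology of the abelian algebra ${\frak a}=\g/\n$, that a summand is acyclic unless $-d\bar\alpha$ lies in the finite set of weights of $\bigoplus_q H^q(\n,V_{\rho})$; since $\alpha\mapsto d\bar\alpha$ is injective on characters of the simply connected abelian group $G/N$, only finitely many summands survive, and $A^{\ast}$ is their sum. The paper instead takes a nilpotent subgroup $C$ with $G=C\cdot N$ (Proposition \ref{Cde}), defines $\Phi(c)$ as the semisimple part of $(\bigoplus{\rm Ad}\otimes\alpha\otimes\rho)(c)$ --- a diagonalizable homomorphism because $C$ is nilpotent (Lemma \ref{jse}) --- and takes $A^{\ast}$ to be the $\Phi(C)$-invariant subcomplex; the inclusion is a quasi-isomorphism because the $G$-action on cohomology is trivial and $\Phi(C)$ is its semisimple part (Lemma \ref{incis}), and an explicit eigenbasis $x_{I}\otimes v_{\alpha_{I}\alpha^{\prime -1}_{k}}\otimes v_{k}$ exhibits finite dimensionality. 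Both arguments are sound, and your observation that unipotence of $\rho_{\vert_{N}}$ is not needed for this particular statement is also right. What the paper's construction buys is the explicit basis indexed by the characters $\alpha_{I}\alpha^{\prime -1}_{k}$, which is precisely what the subsequent corollaries exploit (the description of $A^{\ast}_{\Gamma}=\iota^{-1}(A^{\ast}(G/\Gamma,E_{\rho}))$, the conditions $(\Diamond_{1})$, $(\Diamond_{2})$, and the identification with $\bigwedge{\frak u}_{G}^{\ast}$); your $A^{\ast}$ consists of whole $\alpha$-summands, so it is generally larger and requires computing the weight set before it can be written down, though it is more self-contained in that it avoids Dekimpe's splitting. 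Two small points: your abelian vanishing step essentially reproves Lemma \ref{aaag1}, which you could cite instead, and the spectral sequence is first-quadrant for degree reasons rather than because the terms are finite dimensional (the latter is what you actually use).
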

By Theorem \ref{drr} (resp Theorem  \ref{doll}), we have the inclusion 
\[\iota: A^{\ast}\to \bigoplus_{E_{\alpha}\in {\mathcal A}_{(G,N)}(\Gamma)}A^{\ast}(G/\Gamma, E_{\alpha}\otimes E_{\rho})\]
(resp. 
\[\iota: B^{\ast}\to \bigoplus_{L_{\bar\alpha}\in{\mathcal A}_{(G,N)}(\Gamma)} A^{0,\ast}(G/\Gamma, L_{\bar\alpha}\otimes L_{\bar\sigma})\]
)
inducing a cohomology isomorphism.
Hence we have:
\begin{corollary}\label{Compu}
Let $A_{\Gamma}^{\ast}=\iota^{-1}(A^{\ast}(G/\Gamma,E_{\rho}))$ (resp. $B^{\ast}_{\Gamma}=\iota^{-1}(A^{0,\ast}(G/\Gamma,L_{\bar\sigma}))$.
Then we have an isomorphism
\[H^{\ast}(A^{\ast}_{\Gamma})\cong H^{\ast}(G/\Gamma, E_{\rho})\]
(resp.
\[H^{\ast}(B^{\ast})\cong H^{0,\ast}(G/\Gamma, L_{\bar\sigma})
\]
and hence $\bigwedge \C^{\dim G}\otimes H^{\ast}(B^{\ast})\cong H^{\ast,\ast}(G/\Gamma, L_{\bar\sigma})$).
\end{corollary}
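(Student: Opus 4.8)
The plan is to obtain Corollary \ref{Compu} as a formal consequence of Theorems \ref{drr}/\ref{doll} and \ref{finnn}, the key point being that the chain map $\iota$ is \emph{block diagonal} and \emph{injective} with respect to the two direct-sum decompositions in play. I describe the de Rham case; the Dolbeault case is word-for-word the same, and the last assertion of the corollary is the complex-parallelizability trick already used in Theorem \ref{dolni}. Recall that the map of Theorem \ref{drr},
\[
\jmath\colon \bigoplus_{\alpha\in{\mathcal A}_{(G,N)}}\bigwedge\g^{\ast}_{\C}\otimes V_{\alpha}\otimes V_{\rho}\ \longrightarrow\ \bigoplus_{E\in{\mathcal A}_{(G,N)}(\Gamma)}A^{\ast}(G/\Gamma,E\otimes E_{\rho}),
\]
is the inclusion of left-invariant twisted forms, $\eta\otimes v\mapsto\bigl(g\mapsto\eta(g)\otimes(\alpha\rho)(g)v\bigr)$ (with $\eta(g)$ the left-invariant form evaluated at $g$); in particular it is injective, and it carries the summand indexed by a character $\alpha$ into the single summand $A^{\ast}(G/\Gamma,E_{\alpha}\otimes E_{\rho})$ indexed by the isomorphism class $E_{\alpha}$ of the flat line bundle of $\alpha$. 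Grouping the characters by the class they produce --- write ${\mathcal A}_{(G,N)}=\bigsqcup_{E}{\mathcal A}^{(E)}$ with ${\mathcal A}^{(E)}=\{\alpha:E_{\alpha}=E\}$ --- this says $\jmath=\bigoplus_{E}\jmath_{(E)}$, where $\jmath_{(E)}\colon\bigoplus_{\alpha\in{\mathcal A}^{(E)}}\bigwedge\g^{\ast}_{\C}\otimes V_{\alpha}\otimes V_{\rho}\to A^{\ast}(G/\Gamma,E\otimes E_{\rho})$. By the proof of Theorem \ref{finnn} we may and do take $A^{\ast}$ to be the direct sum of the finitely many summands $\bigwedge\g^{\ast}_{\C}\otimes V_{\alpha}\otimes V_{\rho}$ with $H^{\ast}(\g,V_{\alpha}\otimes V_{\rho})\neq 0$ (that only finitely many $\alpha$ occur is read off from the Hochschild--Serre spectral sequence of the nilradical together with the vanishing of the cohomology of the abelian Lie algebra $\g/\n$ away from the zero weight). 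Then $\iota=\jmath\circ\bigl(A^{\ast}\hookrightarrow\bigoplus_{\alpha}\cdots\bigr)$ is again injective and of the same block form: $\iota=\bigoplus_{E}\iota_{(E)}$ with $\iota_{(E)}\colon A^{\ast}_{(E)}\to A^{\ast}(G/\Gamma,E\otimes E_{\rho})$, where $A^{\ast}_{(E)}$ is the sum of the $\alpha$-summands of $A^{\ast}$ with $E_{\alpha}=E$.

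I would then note that each block $\iota_{(E)}$ is itself a quasi-isomorphism: cohomology commutes with direct sums, so $H^{\ast}(\iota)=\bigoplus_{E}H^{\ast}(\iota_{(E)})$, and by Theorems \ref{drr} and \ref{finnn} this is an isomorphism, while a direct sum of linear maps is an isomorphism exactly when every summand is. Now let $\mathbf 1\in{\mathcal A}_{(G,N)}(\Gamma)$ be the trivial flat line bundle, coming from the trivial character, so $\mathbf 1\otimes E_{\rho}=E_{\rho}$. Because $\iota$ is injective and block diagonal, the preimage $\iota^{-1}\bigl(A^{\ast}(G/\Gamma,E_{\rho})\bigr)$ of that one summand equals $A^{\ast}_{(\mathbf 1)}$: an element $\sum_{E}x_{E}$ maps into it iff $\iota_{(E)}(x_{E})=0$ for all $E\neq\mathbf 1$, i.e.\ iff $x_{E}=0$ for all $E\neq\mathbf 1$ by injectivity. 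Hence $A^{\ast}_{\Gamma}=A^{\ast}_{(\mathbf 1)}$ and $\iota|_{A^{\ast}_{\Gamma}}=\iota_{(\mathbf 1)}$ is a quasi-isomorphism onto $A^{\ast}(G/\Gamma,E_{\rho})$, so $H^{\ast}(A^{\ast}_{\Gamma})\cong H^{\ast}(G/\Gamma,E_{\rho})$.

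For the Dolbeault statement I would run the identical argument with Theorem \ref{doll} replacing Theorem \ref{drr}, obtaining $H^{\ast}(B^{\ast}_{\Gamma})\cong H^{0,\ast}(G/\Gamma,L_{\bar\sigma})$. The remaining claim follows exactly as in Theorem \ref{dolni}: since $G/\Gamma$ is complex parallelizable, the global holomorphic coframe trivializes the bundle of $(p,0)$-forms and $\bar\partial$ acts through the $(0,\ast)$-factor, so $A^{p,\ast}(G/\Gamma,L_{\bar\sigma})\cong\bigwedge^{p}\C^{\dim G}\otimes A^{0,\ast}(G/\Gamma,L_{\bar\sigma})$ as cochain complexes; hence $H^{p,\ast}(G/\Gamma,L_{\bar\sigma})\cong\bigwedge^{p}\C^{\dim G}\otimes H^{\ast}(B^{\ast}_{\Gamma})$, and summing over $p$ gives $\bigwedge\C^{\dim G}\otimes H^{\ast}(B^{\ast}_{\Gamma})\cong H^{\ast,\ast}(G/\Gamma,L_{\bar\sigma})$.

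There is no deep obstacle here --- everything is formal once Theorems \ref{drr}/\ref{doll} and \ref{finnn} are available --- but the step that genuinely deserves care is the claim that $\jmath$ is block diagonal \emph{and injective}. Block diagonality is transparent from the explicit formula above; injectivity of a single block $\jmath_{(E)}$ is more delicate precisely because the correspondence $\alpha\mapsto E_{\alpha}$ is many-to-one (the phenomenon highlighted in the Remark): if $\alpha$ and $\alpha'$ determine the same flat line bundle then $\alpha/\alpha'$ is a character of $G$ restricting trivially to $\Gamma$, and one must use the linear independence over $\C$ of such distinct characters to see that the several $\alpha$-summands with $E_{\alpha}=E$ still map injectively into $A^{\ast}(G/\Gamma,E\otimes E_{\rho})$. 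Since a quasi-isomorphism need not have acyclic kernel, without this injectivity one could conclude only that $A^{\ast}_{\Gamma}\supseteq A^{\ast}_{(\mathbf 1)}$, and the identification of $H^{\ast}(A^{\ast}_{\Gamma})$ with $H^{\ast}(G/\Gamma,E_{\rho})$ would fail to be automatic.
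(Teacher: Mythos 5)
Your argument is correct and is essentially the formal argument the paper itself relies on: the paper states this corollary without proof, deducing it directly from the quasi-isomorphism $\iota$ supplied by Theorems \ref{drr}/\ref{doll} and \ref{finnn}, and only identifies $\iota^{-1}(A^{\ast}(G/\Gamma,E_{\rho}))$ concretely later, in Sections 5 and 6. Your explicit treatment of the block-diagonal structure of $\iota$ and, in particular, of the injectivity of each block $\iota_{(E)}$ --- needed precisely because the correspondence $\alpha\mapsto E_{\alpha}$ is many-to-one --- makes rigorous the step the paper leaves implicit.
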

Consider the adjoint representation ${\rm Ad}$.
Then the restriction  ${\rm Ad}_{\vert_{N}}$ is unipotent.
Hence by the above cochain complex $A^{\ast}_{\Gamma}$, we can compute the cohomology $H^{\ast}(G/\Gamma, E_{\rm Ad})$ on general  solvmanifolds.
The cohomology $H^{\ast}(G/\Gamma, E_{\rm Ad})\cong H^{\ast}(\Gamma, {\rm Ad})$ is important for studying the deformation of lattice $\Gamma$ in $G$.

\section{Preliminary: Jordan decompositions of representations}
Let $A\in GL_{n}(\C)$.
We denote by $A_{s}$ (resp. $A_{u}$) the semi-simple (resp. unipotent) part of $A$ for the Jordan decomposition (see \cite{hum} for the definition).
We will use the following facts.
\begin{lemma}\label{jse}
Let $N$ be  a simply connected nilpotent Lie group and $\varphi:N\to GL(V_{\varphi})$ a representation.
Then the map $\varphi^{\prime}:N\ni g\to (\varphi(g))_{s}$ is also a representation (see \cite{dek}).
Since $\varphi^{\prime}(N)$ is connected nilpotent group and consists of semi-simple elements,
the Zariski-closure of  $\varphi^{\prime}(N)$ is an algebraic torus (see \cite[Section 19]{hum})
 and hence $\varphi^{\prime}$ is diagonalizable.
\end{lemma}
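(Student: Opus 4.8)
The plan is to reduce everything to the structure theory of connected nilpotent linear algebraic groups. First I would pass to $G$, the Zariski closure of $\varphi(N)$ in $GL(V_{\varphi})$ (taking $V_{\varphi}$ to be complex, which is harmless, since the semisimple part of a matrix over a perfect field stays over that field). Since $N$ is connected, $\varphi(N)$ is connected in the Euclidean topology and hence Zariski-connected; since $N$ is nilpotent, $\varphi(N)$, and therefore $G$, is nilpotent. So $G$ is a connected nilpotent linear algebraic group.

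Next I would invoke the classical structure of such groups (see \cite[Section~19]{hum}): the semisimple elements $G_{s}$ of $G$ form a closed central subgroup which is an algebraic torus, the unipotent elements form the unipotent radical $G_{u}$, and multiplication $G_{s}\times G_{u}\to G$ is an isomorphism of algebraic groups. The crucial compatibility to record is that for $x\in G\subseteq GL(V_{\varphi})$ the Jordan decomposition $x=x_{s}x_{u}$ computed in $GL(V_{\varphi})$ has both factors in $G$ and agrees with the one coming from the product decomposition, so that $(\varphi(g))_{s}$ is precisely the image of $\varphi(g)$ under the algebraic projection $\pi\colon G\to G_{s}$, $su\mapsto s$. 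Granting this, $\varphi'=\iota\circ\pi\circ\varphi$ — where $\iota\colon G_{s}\hookrightarrow GL(V_{\varphi})$ is the inclusion — is a composition of Lie group homomorphisms, hence a representation; this is the content of the reference \cite{dek}.

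With $\varphi'$ known to be a homomorphism, the rest is immediate: its image $\varphi'(N)=\pi(\varphi(N))$ is a connected subgroup of the torus $G_{s}$ (and each $\varphi'(g)$ is a semisimple matrix by definition), so its Zariski closure $T$ is a connected closed subgroup of $G_{s}$, and a connected closed subgroup of a torus is again a torus. Since a torus is a diagonalizable algebraic group, the commuting family $\{\varphi'(g):g\in N\}\subseteq T$ is simultaneously diagonalizable, that is, $\varphi'$ is diagonalizable.

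I expect the only real obstacle to be making the homomorphism property of $\varphi'$ genuinely rigorous, since a priori $g\mapsto(\varphi(g))_{s}$ is only a set-theoretic operation on matrices. The point is exactly that inside a connected nilpotent algebraic group this operation is the morphism given by projection onto the central torus factor, and verifying that the Jordan decomposition in $GL(V_{\varphi})$ descends to the closed subgroup $G$ and matches its product decomposition is the delicate step — precisely what is borrowed from \cite{dek} and \cite[Section~19]{hum}. Everything after that is formal.
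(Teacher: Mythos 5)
Your proposal is correct and follows essentially the same route as the paper, which proves this lemma simply by citing \cite{dek} for the homomorphism property of $g\mapsto(\varphi(g))_{s}$ and \cite[Section 19]{hum} for the structure of connected nilpotent algebraic groups (central torus of semisimple elements times unipotent radical). Your elaboration — passing to the Zariski closure of $\varphi(N)$, identifying $(\varphi(g))_{s}$ with the projection onto the central torus factor, and concluding diagonalizability from the image lying in a torus — is exactly the content those references are invoked to supply.
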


\section{Proof of Theorem \ref{drr} }
\subsection{Cohomology of tori}
Let $A$ be a simply connected real abelian Lie group with a lattice $\Gamma$ and $\frak a$ the Lie algebra of $A$.
\begin{lemma}\label{aaag1}
Let $\rho:A\to GL (V_{\rho})$  be a  representation.
Suppose $\rho=\beta\otimes\phi$ such that $\beta$ is a character of  $A$ and $\phi$ is a unipotent representation.
Then we have:

If $\beta$ is non-trivial, then we have
\[H^{\ast}({\frak a},V_{\rho})=0.
\]

If the flat line bundle $E_{\beta}$ is non-trivial, then we have
\[H^{\ast}(A/\Gamma,E_{\rho})=0.\]
\end{lemma}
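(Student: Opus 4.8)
The plan is to reduce both halves of the lemma to one elementary acyclicity fact. Since $A$ is simply connected and abelian we have $A\cong\R^{n}$, the Lie algebra $\frak a$ is abelian of dimension $n$, and $A/\Gamma$ is an $n$-torus. The fact I would isolate first is: if $f_{1},\dots,f_{n}$ are pairwise commuting endomorphisms of a finite dimensional complex vector space $W$ and $f_{1}$ is invertible, then the ``Koszul complex'' $\bigl(\bigwedge^{\bullet}\C^{n}\otimes W,\ \delta\bigr)$ with $\delta(\omega\otimes w)=\sum_{j}(e^{j}\wedge\omega)\otimes f_{j}w$ is acyclic. This is immediate: writing $\iota$ for contraction with the first basis vector of $\C^{n}$, a one-line computation gives $\delta\iota+\iota\delta=f_{1}$ on the $W$-factor; since $f_{1}$ commutes with $\delta$, the operator $f_{1}^{-1}\iota$ is a contracting homotopy. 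Each half of the lemma then follows by writing the relevant complex in this form and checking that the non-triviality hypothesis forces $f_{1}$ to be invertible, the invertibility coming entirely from the unipotence of $\phi$.

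For the first statement, because $\frak a$ is abelian the Chevalley--Eilenberg complex $\bigwedge\frak a^{\ast}_{\C}\otimes V_{\rho}$ has differential $\sum_{j}e^{j}\wedge d\rho(x_{j})$ for any basis $x_{1},\dots,x_{n}$ of $\frak a$ with dual basis $e^{1},\dots,e^{n}$; the operators $d\rho(x_{j})\in{\rm End}(V_{\rho})$ commute since $\frak a$ is abelian, so this is a Koszul complex as above with $f_{j}=d\rho(x_{j})$. As $A$ is simply connected, $\beta=\exp\circ\, d\beta$ with $d\beta\in\frak a^{\ast}_{\C}$, so $\beta$ non-trivial means $d\beta\neq0$; I would pick $x_{1}$ with $d\beta(x_{1})\neq0$. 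Then $d\rho(x_{1})=d\beta(x_{1})\,{\rm id}_{V_{\rho}}+d\phi(x_{1})$ with $d\phi(x_{1})$ nilpotent (as $\phi$ is unipotent), hence invertible, and the Koszul fact gives $H^{\ast}(\frak a,V_{\rho})=0$.

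For the second statement I would use that $A/\Gamma$ is a $K(\Gamma,1)$, so $H^{\ast}(A/\Gamma,E_{\rho})\cong H^{\ast}(\Gamma,V_{\rho})$ with $\Gamma\cong\Z^{n}$ acting through the holonomy of $E_{\rho}$ (the representation $\rho_{\vert_{\Gamma}}$, up to replacing $\gamma$ by $\gamma^{-1}$, which is immaterial below). Computing this group cohomology by the Koszul free resolution of $\Z$ over $\Z[\Gamma]=\Z[t_{1}^{\pm1},\dots,t_{n}^{\pm1}]$ associated to a $\Z$-basis $\gamma_{1},\dots,\gamma_{n}$ of $\Gamma$ produces exactly a Koszul complex of the above form with $f_{j}=\rho(\gamma_{j})-{\rm id}$ (commuting, as $\Gamma$ is abelian). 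Now $E_{\beta}$ non-trivial means the holonomy character $\beta_{\vert_{\Gamma}}\colon\Gamma\to\C^{\ast}$ is non-trivial, so some $\gamma$ has $\beta(\gamma)\neq1$; writing $\gamma=m\gamma_{1}$ with $\gamma_{1}$ primitive shows $\beta(\gamma_{1})\neq1$, and a primitive vector is part of a $\Z$-basis, so I may take $\gamma_{1}$ as above. Then $f_{1}=\beta(\gamma_{1})\phi(\gamma_{1})-{\rm id}$, and since $\phi(\gamma_{1})$ is unipotent every eigenvalue of $\beta(\gamma_{1})\phi(\gamma_{1})$ equals $\beta(\gamma_{1})\neq1$; thus $f_{1}$ is invertible and $H^{\ast}(A/\Gamma,E_{\rho})=0$.

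I do not expect a serious obstacle: the only genuinely substantive point is the linear-algebra observation that a non-trivial $\beta$ makes the distinguished operator ($d\rho(x_{1})$, resp. $\rho(\gamma_{1})-{\rm id}$) invertible, which hinges on the unipotence of $\phi$; everything else is the standard identification of sheaf cohomology on a torus with group cohomology of $\Z^{n}$ together with its Koszul resolution. If one prefers to avoid quoting group cohomology, the second statement can instead be obtained by a contracting homotopy directly on the twisted de Rham complex $A^{\ast}(A/\Gamma,E_{\rho})$ using the translation-invariant vector field dual to $x_{1}$, after rescaling sections by $\exp(-(\mu+N)x_{1})$ with $e^{\mu}e^{N}=\rho(\gamma_{1})$ and $N$ nilpotent; this is self-contained but computationally heavier, so I would present the group-cohomology route.
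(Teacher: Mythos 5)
Your proof is correct, but it takes a different route from the paper's. The paper argues by induction on $\dim V_{\rho}$: using the triangulation of the unipotent factor it finds a codimension-one submodule $V_{\rho'}$ with quotient $V_{\beta}$, and then runs the long exact cohomology sequence, reducing everything to the one-dimensional case --- which it disposes of by ``simple computation'' for $H^{\ast}(\frak a,V_{\beta})$ and by citing \cite[Lemma 2.1]{Kd} for $H^{\ast}(A/\Gamma,E_{\beta})\cong H^{\ast}(\Gamma,\beta)$. You instead treat both statements uniformly and in one stroke via the Koszul contracting homotopy $\delta\iota+\iota\delta=f_{1}$, with unipotence of $\phi$ entering only through the invertibility of the distinguished operator ($d\beta(x_{1})\,{\rm id}+$ nilpotent in the Lie algebra case, $\beta(\gamma_{1})\cdot(\text{unipotent})-{\rm id}$ in the group case). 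What your approach buys is self-containedness (no appeal to \cite{Kd}, no induction, no five-lemma bookkeeping) and a cleaner identification of exactly where the hypothesis is used; what the paper's approach buys is brevity on the page and consistency with the filtration-by-submodules technique it reuses elsewhere (e.g.\ in Lemma \ref{aaag} for the Dolbeault case). The only points worth making explicit in a write-up are the two standard identifications you quote: that a flat line bundle on $A/\Gamma$ given by a character is trivial iff the restricted character $\beta_{\vert_{\Gamma}}$ is trivial, and that $H^{\ast}(A/\Gamma,E_{\rho})\cong H^{\ast}(\Gamma,V_{\rho})$ computed by the Koszul resolution of $\Z$ over $\Z[\Gamma]$; both are correct as you state them.
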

\begin{proof}
Suppose $\dim V_{\rho}=1$. Then if $\beta$ is non-trivial,
we can show $H^{\ast}({\frak a},V_{\rho})=H^{\ast}({\frak a},V_{\beta})=0$ by simple computation
and 
if $E_{\beta}$ is non-trivial, then we have
$H^{\ast}(A/\Gamma,E_{\rho})=H^{\ast}(\Gamma, \beta)=0$
by \cite[Lemma 2.1]{Kd}.

In case $\dim V_{\sigma}=n>1$,  by the triangulation of $\rho$, we have a $(n-1)$-dimensional $A$-submodule $V_{\rho^{\prime}}$ such that $V_{\rho}/V_{\rho^{\prime}}=V_{\beta}$.
Then by the  long exact sequence of cohomology of Lie algebra or group  (see \cite{OV}), the lemma follows inductively.  
\end{proof}

\begin{lemma}\label{ddecc}
Let $\rho:A\to GL(V_{\rho})$ be a  representation.
Then we have a basis of $V_{\rho}$  such that $\rho$ is represented by
\[\rho=\bigoplus_{i=1}^{k}\alpha_{i}\otimes\phi_{i}
\]
for characters   $\alpha_{i}$ of $G$ and unipotent representations $\phi_{i}$ of $G$.
\end{lemma}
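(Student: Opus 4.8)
The plan is to promote the pointwise multiplicative Jordan decomposition to a decomposition of the whole representation, using Lemma \ref{jse} to control the semisimple part globally. Since $A$ is abelian, hence nilpotent, and simply connected, Lemma \ref{jse} applies to $\rho$: the map $\rho^{\prime}\colon A\ni g\mapsto (\rho(g))_{s}$ is a representation, its image is a connected nilpotent group of semisimple elements, and therefore $\rho^{\prime}$ is diagonalizable. I would then decompose $V_{\rho}=\bigoplus_{i=1}^{k}V_{i}$ into the common eigenspaces of the family $\{\rho^{\prime}(g)\}_{g\in A}$; on $V_{i}$ each $\rho^{\prime}(g)$ acts by a scalar $\alpha_{i}(g)$, and since $\rho^{\prime}$ is a homomorphism each $\alpha_{i}\colon A\to\C^{\ast}$ is a character.

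Next I would check that the complementary "unipotent part" is itself a representation. Set $\rho_{u}(g)=\rho^{\prime}(g)^{-1}\rho(g)=(\rho(g))_{u}$. Because $A$ is abelian, $\rho(g)$ and $\rho(h)$ commute for all $g,h$, and commuting operators have commuting semisimple and unipotent parts; in particular $\rho^{\prime}(h)=(\rho(h))_{s}$ commutes with $\rho(g)$. A short computation then gives $\rho_{u}(gh)=\rho^{\prime}(gh)^{-1}\rho(gh)=\rho^{\prime}(h)^{-1}\rho^{\prime}(g)^{-1}\rho(g)\rho(h)=\rho_{u}(g)\rho_{u}(h)$, so $\rho_{u}$ is a representation, and it is unipotent since each $\rho_{u}(g)$ is.

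Then I would verify that the decomposition $V_{\rho}=\bigoplus_{i}V_{i}$ is $\rho$-invariant. For each $g$, the operator $\rho_{u}(g)=(\rho(g))_{u}$ commutes with every $\rho^{\prime}(h)=(\rho(h))_{s}$ (again because commuting operators have commuting Jordan parts), hence $\rho_{u}(g)$ preserves each common eigenspace $V_{i}$ of $\{\rho^{\prime}(h)\}_{h\in A}$. Since $\rho^{\prime}(g)$ also preserves $V_{i}$, so does $\rho(g)=\rho^{\prime}(g)\rho_{u}(g)$. Writing $\phi_{i}$ for the restriction of $\rho_{u}$ to $V_{i}$, which is a unipotent representation, we obtain $\rho_{\vert V_{i}}=\alpha_{i}\otimes\phi_{i}$, i.e. $V_{\rho}\cong\bigoplus_{i=1}^{k}V_{\alpha_{i}}\otimes V_{\phi_{i}}$ as $A$-modules. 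Finally, choosing a basis of $V_{\rho}$ obtained by concatenating, for each $i$, a basis of $V_{i}$ in which the unipotent group $\phi_{i}(A)$ is upper triangular unipotent realizes $\rho$ in the asserted block form $\bigoplus_{i=1}^{k}\alpha_{i}\otimes\phi_{i}$.

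The one place where care is needed is the interaction of the Jordan decomposition with commutation: that $\rho(g)$ commuting with $\rho(h)$ forces $(\rho(g))_{s}$ and $(\rho(g))_{u}$ to commute with $(\rho(h))_{s}$ and $(\rho(h))_{u}$. This standard fact is exactly what makes $\rho_{u}$ a homomorphism and what makes the weight-space decomposition of $\rho^{\prime}$ stable under $\rho_{u}$; once it is in place, together with the diagonalizability of $\rho^{\prime}$ from Lemma \ref{jse}, the rest of the argument is formal bookkeeping.
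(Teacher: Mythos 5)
Your proof is correct and follows essentially the same route as the paper: both decompose $V_{\rho}$ into the simultaneous (generalized) eigenspaces on which the semisimple parts $(\rho(a))_{s}$ act by a character, and both use Lemma \ref{jse} (equivalently, the commutation of Jordan parts of commuting operators) to see that the unipotent parts form a unipotent representation. Your version merely spells out the homomorphism property of $g\mapsto(\rho(g))_{u}$ and the $\rho$-invariance of the eigenspaces, which the paper leaves implicit.
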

\begin{proof}
For a character $\alpha$, we denote by $W_{\alpha}$ the subspace of $V_{\rho}$ consisting of the elements $w\in V_{\rho}$ such that for some positive integer $n$ we have $(\rho(a)-\alpha(a) I)^{n}w=0$ for any $a\in A$.
Since $A$ is abelian, we have a decomposition
\[V_{\rho}=W_{\alpha_{1}}\oplus\dots \oplus W_{\alpha_{k}}\]
by generalized eigenspace decomposition of $\rho(a)$ for all $a\in A$.
Let $\rho_{i}(a)=(\rho(a))_{\vert_{W_{\alpha_{i}}}}$.
Then we have $\rho=\rho_{1}\oplus \dots \oplus \rho_{k}$.
We have $(\rho_{i}(a))_{s}=\alpha_{i} I$.
Let $\phi_{i}(a)=(\rho_{i}(a))_{u}$.
By Lemma \ref{jse}, $\phi_{i}$ is a unipotent representation and we have $\rho_{i}(a)=(\rho_{i}(a))_{s}(\rho_{i}(a))_{u}=(\alpha_{i} \otimes \phi_{i})(a)$.
Hence the Lemma follows.

\end{proof}

Let $\{V_{\alpha}\}_{\alpha\in {\rm Hom}(A,\C^{\ast})}$ be the set of all $1$-dimensional representations of $A$ and  ${\mathcal H}(A/\Gamma)=\{E_{\beta}\}$ the set of all the isomorphism classes of 
flat line bundles given by $\{V_{\alpha}\}_{\alpha\in {\rm Hom}(A,\C^{\ast})}$.
We notice that the correspondence $\{V_{\alpha}\}_{\alpha\in {\rm Hom}(A,\C^{\ast})}\to {\mathcal H}(A/\Gamma)$ is not injective.
We consider the direct sums
\[\bigoplus_{\alpha\in {\rm Hom}(A,\C^{\ast})}\bigwedge {\frak a}^{\ast}_{\C}\otimes V_{\alpha}\otimes V_{\rho}\]
and 
\[\bigoplus_{E_{\alpha}\in {\mathcal H}(A/\Gamma)}A^{\ast}(A/\Gamma,E_{\alpha}\otimes E_{\rho}).\]
\begin{proposition}\label{abisR}
The  inclusion
\[\bigoplus_{\alpha\in {\rm Hom}(A,\C^{\ast})}\bigwedge {\frak a}^{\ast}_{\C}\otimes V_{\alpha}\otimes V_{\rho}\to \bigoplus_{E_{\alpha}\in {\mathcal H}(A/\Gamma)}A^{\ast}(A/\Gamma,E_{\alpha}\otimes E_{\rho})\]
induces a cohomology isomorphism.
\end{proposition}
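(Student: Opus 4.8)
The plan is to reduce to the case of unipotent $\rho$, and then to study the inclusion one flat line bundle at a time, using systematically the fact --- stressed in the Remark after Theorem~\ref{drr} --- that many characters may give the same flat line bundle.

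\textbf{Reduction to unipotent $\rho$.} By Lemma~\ref{ddecc} we may write $\rho=\bigoplus_{i}\alpha_{i}\otimes\phi_{i}$ with $\alpha_{i}\in{\rm Hom}(A,\C^{\ast})$ and each $\phi_{i}$ unipotent. For a fixed $i$, tensoring by $\alpha_{i}$ is a bijection of ${\rm Hom}(A,\C^{\ast})$ carrying $V_{\alpha}\otimes(V_{\alpha_{i}}\otimes V_{\phi_{i}})$ to $V_{\alpha\alpha_{i}}\otimes V_{\phi_{i}}$, and it induces a bijection of ${\mathcal H}(A/\Gamma)$, with $E_{\alpha}\otimes E_{\alpha_{i}}\otimes E_{\phi_{i}}=E_{\alpha\alpha_{i}}\otimes E_{\phi_{i}}$; these identifications are compatible with the inclusion. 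Taking the direct sum over $i$, it suffices to prove the proposition when $\rho=\phi$ is unipotent. From now on fix coordinates so that $A=\R^{n}$, $\Gamma=\Z^{n}$, $A/\Gamma=T^{n}$, and $\bigwedge\mathfrak{a}^{\ast}_{\C}$ is generated by the closed invariant $1$-forms $dx_{1},\dots,dx_{n}$. Each character of $A$ has the form $\alpha_{\lambda}(x)=e^{\langle\lambda,x\rangle}$ for a unique $\lambda\in\C^{n}$, the bundle $E_{\alpha_{\lambda}}$ depends only on $\alpha_{\lambda}|_{\Gamma}$, and $E_{\alpha_{\lambda}}\cong E_{\alpha_{\lambda'}}$ exactly when $\lambda-\lambda'\in 2\pi i\,\Z^{n}$. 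The canonical inclusion sends $\omega\otimes v\in\bigwedge\mathfrak{a}^{\ast}_{\C}\otimes V_{\alpha_{\lambda}}\otimes V_{\phi}$ to the $E_{\alpha_{\lambda}}\otimes E_{\phi}$-valued form $x\mapsto\alpha_{\lambda}(x)\,\phi(x)v\cdot\omega$, and a short computation (using that $\phi$ is unipotent and the $dx_{j}$ are closed) shows it is a morphism of cochain complexes.

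\textbf{Splitting by flat line bundle and Fourier mode.} Group both direct sums by isomorphism class. Fix $[E]\in{\mathcal H}(A/\Gamma)$ and represent it by $\alpha_{\lambda_{0}}$; the characters realizing $[E]$ are precisely the $\alpha_{\lambda_{0}+2\pi i m}$, $m\in\Z^{n}$. Since $\phi(x)$ is invertible and the exponentials $e^{2\pi i\langle m,x\rangle}$ are linearly independent, the forms $\alpha_{\lambda_{0}+2\pi i m}(x)\,\phi(x)v\cdot\omega$ are linearly independent for distinct $m$; hence the inclusion identifies $\bigoplus_{m\in\Z^{n}}\bigwedge\mathfrak{a}^{\ast}_{\C}\otimes V_{\alpha_{\lambda_{0}+2\pi i m}}\otimes V_{\phi}$ with a subcomplex $C_{[E]}=\bigoplus_{m}C_{m}$ of $A^{\ast}(A/\Gamma,E_{[E]}\otimes E_{\phi})$, in which $C_{m}$ (the image of the $m$-th summand) is itself a subcomplex isomorphic to the Lie algebra cochain complex of the $A$-module $V_{\alpha_{\lambda_{0}+2\pi i m}}\otimes V_{\phi}$. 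Letting $[E]$ vary, the inclusion of the proposition becomes the direct sum of the inclusions $C_{[E]}\hookrightarrow A^{\ast}(A/\Gamma,E_{[E]}\otimes E_{\phi})$, so it is enough to show each of these is a quasi-isomorphism.

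\textbf{Conclusion and the main obstacle.} By Lemma~\ref{aaag1}, $H^{\ast}(C_{m})\cong H^{\ast}(\mathfrak{a},V_{\alpha_{\lambda_{0}+2\pi i m}}\otimes V_{\phi})=0$ unless the character $\alpha_{\lambda_{0}+2\pi i m}$ is trivial, i.e. unless $\lambda_{0}+2\pi i m=0$; this forces $[E]$ to be the trivial class and then holds for exactly one $m$, which we take to be $0$. Likewise $H^{\ast}(A/\Gamma,E_{[E]}\otimes E_{\phi})=0$ for nontrivial $[E]$. Hence for nontrivial $[E]$ both complexes are acyclic and there is nothing to prove. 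For the trivial class, $C_{0}$ is exactly the image of the Nomizu inclusion $\bigwedge\mathfrak{a}^{\ast}_{\C}\otimes V_{\phi}\hookrightarrow A^{\ast}(A/\Gamma,E_{\phi})$, which is a quasi-isomorphism by Theorem~\ref{nill} applied to the nilpotent group $A$, and $C_{0}\hookrightarrow C_{[E]}$ is a quasi-isomorphism because the complexes $C_{m}$ with $m\neq 0$ are acyclic; since this map and its composite with $C_{[E]}\hookrightarrow A^{\ast}(A/\Gamma,E_{\phi})$ (namely the Nomizu inclusion) are both quasi-isomorphisms, so is $C_{[E]}\hookrightarrow A^{\ast}(A/\Gamma,E_{\phi})$, which completes the proof. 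The one genuinely new point --- and the step I expect to be the main obstacle to get right --- is that, because $\{V_{\alpha}\}\to{\mathcal H}(A/\Gamma)$ is not injective, the image of the left-hand side is strictly larger than the Nomizu subcomplex: it is the subcomplex of forms with trigonometric-polynomial coefficients. One therefore has to verify that the extra Fourier modes $C_{m}$ with $m\neq 0$ carry no cohomology, which is precisely the vanishing furnished by Lemma~\ref{aaag1}; the rest is bookkeeping about which characters produce which flat bundles.
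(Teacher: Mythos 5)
Your proposal is correct and follows essentially the same route as the paper: reduce to unipotent $\rho$ via the decomposition of Lemma~\ref{ddecc}, kill every summand whose character (resp.\ flat line bundle) is nontrivial by Lemma~\ref{aaag1}, and handle the single surviving summand on each side by Nomizu's theorem (Theorem~\ref{nill}). Your explicit Fourier-mode bookkeeping of which characters $\alpha_{\lambda_{0}+2\pi i m}$ realize a given bundle class is a more detailed rendering of the same computation the paper performs when it identifies both cohomologies with $H^{\ast}(\mathfrak{a},\bigoplus_{i}V_{\phi_{i}})$.
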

\begin{proof}
Consider the  decomposition 
\[\sigma=\bigoplus_{i=1}^{k}\alpha_{i}\otimes\phi_{i}
\]
as the above lemma.
Then we have 
\[\bigoplus_{\alpha\in {\rm Hom}(A,\C^{\ast})}\bigwedge {\frak a}^{\ast}_{\C}\otimes V_{\alpha}\otimes V_{\rho}=\bigoplus_{\alpha\in {\rm Hom}(A,\C^{\ast})}\bigwedge {\frak a}^{\ast}\otimes  \bigoplus_{i=1}^{k} V_{\alpha\alpha_{i}}\otimes V_{\phi_{i}}\]
and 
\[\bigoplus_{E_{\alpha}\in {\mathcal H}(A/\Gamma)}A^{\ast}(A/\Gamma,E_{\alpha}\otimes E_{\rho})=\bigoplus_{E_{\alpha}\in {\mathcal H}(A/\Gamma)}A^{\ast}(A/\Gamma, \bigoplus_{i=1}^{k} E_{\alpha}\otimes E_{\alpha_{i}}\otimes E_{\phi_{i}}).
\]
By Theorem \ref{nill} and Lemma \ref{aaag1}, we have
\[H^{\ast}(\bigoplus_{E_{\alpha}\in {\mathcal H}(A/\Gamma)}A^{\ast}(A/\Gamma, E_{\alpha}\otimes E_{\rho}))\cong H^{\ast}(A/\Gamma, \bigoplus^{k}_{i=1} E_{\phi_{i}})\cong H^{\ast}({\frak a}, \bigoplus_{i=1}^{k} V_{\phi_{i}}). 
\]
By Lemma \ref{aaag1} we have
\[H^{\ast}(\bigoplus_{\alpha\in {\rm Hom}(A,\C^{\ast})}\bigwedge {\frak a}_{\C}^{\ast}\otimes V_{\alpha}\otimes V_{\rho})\cong H^{\ast}({\frak a}, \bigoplus_{i=1}^{k} V_{\phi_{i}}). 
\]
Hence the proposition follows.
\end{proof}
\subsection{Mostow bundle and spectral sequence}
Let $G$ be  a simply connected  solvable Lie group with a lattice $\Gamma$ and $\g$ be the Lie algebra of $G$.
Let $N$ be the nilradical of $G$.
It is known that $\Gamma\cap N$ is a lattice of $N$ and $\Gamma/\Gamma\cap N$ is a lattice of the abelian Lie group $G/N$ (see \cite{R}).
The solvmanifold $G/\Gamma$ is a fiber bundle
\[\xymatrix{
N/\Gamma\cap N=N\Gamma/\Gamma\ar[r]& G/\Gamma\ar[r]  &G/N\Gamma=(G/N)/(\Gamma/\Gamma\cap N)
}
\]
 over a torus with a nilmanifold $N/\Gamma\cap N$ as fiber.
We call this fiber bundle the Mostow bundle of $G/\Gamma$.
The structure group is $N\Gamma/\Gamma_{0}$ as left translations  where $\Gamma_{0}$ is the largest normal subgroup of $\Gamma$ which is normal in $N\Gamma$ (see \cite{St}).

Let $\rho:G\to GL(V_{\rho})$ be a representation such that the restriction $\rho_{\vert_{N}}$ is a unipotent representation.
For the Mostow bundle $p:G/\Gamma\to (G/N)/(\Gamma/\Gamma\cap N)$,
we define the vector bundle 
\[{\bf H}^{q}(N/\Gamma\cap N)=\sqcup_{x\in  (G/N)/(\Gamma/\Gamma\cap N)}H^{q}(p^{-1}(x),E_{\rho})\]
 over the torus $(G/N)/(\Gamma/\Gamma\cap N)$.
By Theorem \ref{nill}, 
 we have $H^{q}(p^{-1}(x), E_{\rho})\cong H^{q}(\n , V_{\rho})$.
Hence let $\Lambda_{q}:G/N\to GL(H^{q}(\n , V_{\rho}))$    be the representation induced by the extension $1\to N\to G\to G/N\to 1$, then
 we can regard ${\bf H}^{q}(N/\Gamma\cap N)$ as the flat  bundle $E_{\Lambda_{q}}$.
We consider the filtlation 
\[F^{p}\bigwedge ^{p+q}\g^{\ast}_{\C}=\{\omega\in \bigwedge^{p+q} \g^{\ast}_{\C}\vert \omega(X_{1},\dots, X_{p+1})=0 \,\, {\rm for}\,\, X_{1},\dots, X_{p+1}\in\n_{\C}\}.
\]
This filtration gives the filtration of the cochain complex $\bigwedge \g^{\ast}_{\C}\otimes V_{\rho}$ and the filtration of the de Rham complex $A^{\ast}(G/\Gamma,E_{\rho})$.
We consider the spectral sequence $E^{\ast,\ast}_{\ast}(\g)$ of $\bigwedge \g^{\ast}_{\C}\otimes V_{\rho}$ and the spectral sequence $E^{\ast,\ast}_{\ast}(G/\Gamma)$ of $A^{\ast}(G/\Gamma,E_{\rho})$.
Set $G/N=A$ and $\Gamma/\Gamma\cap N=\Delta$ and $\frak a=\g/\n$.
Then we have the commutative diagram
\[\xymatrix{
	E_{1}^{\ast,q}(\g)\ar[r]\ar[d]^{\cong}& E_{1}^{\ast,q}(G/\Gamma)\ar[d]^{\cong} \\
	\bigwedge {\frak a}_{\C}^{\ast}\otimes V_{\Lambda_{q}} \ar[r]&A^{\ast}(A/\Delta,E_{\Lambda_{q}})
 }
\]
(see \cite{Hatt}, \cite[Section 7]{R}).

\subsection{Proof of Theorem \ref{drr}}
\begin{proof}

Consider the spectral sequence $E_{\ast}^{\ast,\ast}(\g)$ of  
\[\bigoplus_{\alpha\in{\mathcal A}_{(G,N)}} \bigwedge \g^{\ast}_{\C}\otimes V_{\alpha}\otimes V_{\rho}\]
and the spectral sequence  $E_{\ast}^{\ast,\ast}(G/\Gamma)$ of
\[\bigoplus_{E_{\alpha}\in {\mathcal A}_{(G,N)}(\Gamma)}A^{\ast}(G/\Gamma, E_{\alpha}\otimes E_{\rho}).
\]
Set $A=G/N$ and $\Delta=\Gamma/\Gamma\cap N$ and $\frak a=\g/\n$.
Since we can identify ${\mathcal A}_{(G,N)}$ (resp. ${\mathcal A}_{(G,N)}(\Gamma) $) with ${\rm Hom}(A,\C^{\ast})$ (resp. ${\mathcal H}(A/\Delta)$),  we have the commutative diagram
\[\xymatrix{
E_{1}^{\ast,q}(\g)\ar[r]\ar[d]^{\cong}&E_{1}^{\ast,q}(G/\Gamma)\ar[d]^{\cong} \\
	\bigoplus_{\alpha\in {\rm Hom}(A,\C^{\ast})} \bigwedge {\frak a}_{\C}^{\ast}\otimes V_{\alpha}\otimes V_{\Lambda_{q}}  \ar[r]&\bigoplus_{E_{\alpha}\in{\mathcal H}(A/\Delta)} A^{0,\ast}(A/\Delta,E_{\alpha}\otimes E_{\Lambda_{q}}). 
 }
\]
By Proposition \ref{abisR}, the homomorphism $E_{1}^{\ast,\ast}(\g)\to E_{1}^{\ast,\ast}(G/\Gamma)$ induces a cohomology isomorphism and hence we have an isomorphism $E_{2}^{\ast,\ast}(\g)\cong E_{2}^{\ast,\ast}(G/\Gamma)$.
Hence the theorem follows.
\end{proof}

\section{Proof of Theorem  \ref{doll}}
\subsection{Dolbeault cohomology of tori}\label{To}
First we prove Theorem \ref{dolni}
by Sakane's Theorem \cite{Sak}.
\begin{proof}[Proof of Theorem \ref{dolni}]
In case $\dim V_{\sigma}=1$, $\sigma$ is trivial and the theorem follows from Sakane's Theorem \cite{Sak}.

In case $\dim V_{\sigma}=n>1$, since $\sigma$ is unipotent, we have a $(n-1)$-dimensional $G$-submodule $V_{\sigma^{\prime}}\subset  V_{\sigma}$ such that $V_{\sigma}/V_{\sigma^{\prime}}$ is the trivial submodule.
Then we have the spectral sequences
\[\xymatrix{
0\ar[r]& \bigwedge  \g^{\ast}\otimes V_{\sigma^{\prime}}\ar[r]&\bigwedge  \g^{\ast}\otimes V_{\sigma}\ar[r] &\bigwedge \g^{\ast}\otimes V_{\sigma}/V_{\sigma^{\prime}}\ar[r]&0
}
\]
and 
\[\xymatrix{
0\ar[r]&  A^{0,\ast}(G/\Gamma,L_{\bar\sigma^{\prime}})\ar[r]& A^{0,\ast}(G/\Gamma,L_{\bar\sigma})\ar[r] & A^{0,\ast}(G/\Gamma,L_{\bar\sigma}/L_{\bar\sigma^{\prime}})\ar[r]&0.
}
\]
We have the commutative diagram
\[\xymatrix{
0\ar[r]& \bigwedge  \g^{\ast}\otimes V_{\sigma^{\prime}}\ar[r]\ar[d]&\bigwedge  \g^{\ast}\otimes V_{\sigma}\ar[r]\ar[d] &\bigwedge  \g^{\ast}\otimes V_{\sigma}/V_{\sigma^{\prime}}\ar[r]\ar[d]&0\\
0\ar[r]&  A^{0,\ast}(G/\Gamma,L_{\bar\sigma^{\prime}})\ar[r]& A^{0,\ast}(G/\Gamma,L_{\bar\sigma})\ar[r] & A^{0,\ast}(G/\Gamma,L_{\bar\sigma}/L_{\bar\sigma^{\prime}})\ar[r]&0.
}
\]
Considering the long exact sequence of cohomologies, by the five lemma, the theorem follows inductively.
\end{proof}
Let $A$ be a simply connected complex abelian group with a lattice $\Gamma$ and $\frak a$ the Lie algebra of $A$.
\begin{lemma}\label{aaag}
Let $\sigma:A\to GL (V_{\sigma})$  be a holomorphic representation.
Suppose $\sigma=\beta\otimes\phi$ such that $\beta$ is a character of  $A$ and $\phi$ is a unipotent representation.
Then we have:

If $\beta$ is non-trivial, then we have
\[H^{\ast}({\frak a},V_{\sigma})=0.
\]

If the holomorphic line bundle $L_{\bar\beta}$ is non-trivial, then we have
\[H^{0,\ast}(A/\Gamma,L_{\bar\sigma})=0\]
\end{lemma}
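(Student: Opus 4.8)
The plan is to run the proof of Lemma~\ref{aaag1} with de Rham cohomology replaced by Dolbeault cohomology. The Lie algebra statement $H^{\ast}({\frak a},V_{\sigma})=0$ for non-trivial $\beta$ is exactly that of Lemma~\ref{aaag1}: the cohomology of the (complexified) abelian Lie algebra ${\frak a}$ does not see the complex structure of $A$, so one may quote Lemma~\ref{aaag1} verbatim, or repeat its argument --- reduce to $\dim V_{\sigma}=1$ via an $A$-stable flag of $\phi$ and the long exact sequence in Lie algebra cohomology, and then observe that for $\beta$ non-trivial (equivalently $d\beta\neq 0$) the complex $(\bigwedge {\frak a}^{\ast}\otimes V_{\beta},d)$ is the Koszul complex for the nonzero covector $d\beta$, hence acyclic.

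For the Dolbeault statement I would first reduce to the case $\dim V_{\sigma}=1$. Since $\sigma=\beta\otimes\phi$ with $\phi$ unipotent, $V_{\sigma}$ carries an $A$-stable filtration all of whose successive quotients are isomorphic to $V_{\beta}$ (namely $V_{\beta}\otimes V_{i}$ for a unipotent flag $\{V_{i}\}$ of $\phi$). This produces short exact sequences of flat holomorphic bundles $0\to L_{\bar\sigma^{\prime}}\to L_{\bar\sigma}\to L_{\bar\beta}\to 0$, hence long exact sequences in Dolbeault cohomology; by the five lemma and induction on $\dim V_{\sigma}$ --- exactly as in the proof of Theorem~\ref{dolni} --- the vanishing $H^{0,\ast}(A/\Gamma,L_{\bar\sigma})=0$ follows from the case $\sigma=\beta$. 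Note that the character $\beta$ is unchanged along the flag, so the hypothesis ``$L_{\bar\beta}$ non-trivial'' is preserved at each stage.

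It then remains to show: if a flat holomorphic line bundle $L_{\bar\beta}$ on the complex torus $A/\Gamma=\C^{n}/\Lambda$ is non-trivial, then $H^{0,q}(A/\Gamma,L_{\bar\beta})=H^{q}(A/\Gamma,L_{\bar\beta})=0$ for every $q$. A flat line bundle has vanishing Chern class, so $L_{\bar\beta}$ lies in ${\rm Pic}^{0}(A/\Gamma)$, and this is the standard fact that a non-trivial degree-zero line bundle on a complex torus is acyclic. I would prove it by Fourier analysis. The translation-invariant forms $d\bar z_{1},\dots,d\bar z_{n}$ trivialize $\Omega^{0,\bullet}$ of the torus, so $(A^{0,\ast}(A/\Gamma,L_{\bar\beta}),\bar\partial)$ becomes the Koszul complex on the commuting operators $\nabla^{0,1}_{1},\dots,\nabla^{0,1}_{n}$ acting on the smooth sections of $L_{\bar\beta}$; writing $\beta(z)=e^{\langle c,z\rangle}$ and trivializing $L_{\bar\beta}$ smoothly over $\C^{n}$ by $z\mapsto e^{\langle\bar c,\bar z\rangle}$ turns these into $\partial/\partial\bar z_{j}+\bar c_{j}$ on $C^{\infty}(\C^{n}/\Lambda)$, which the Fourier characters indexed by the dual lattice $\Lambda^{\ast}$ simultaneously diagonalize, acting by scalars $\nu_{j}(\xi)$. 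Hence the cohomology equals $\bigoplus_{\xi\in\Lambda^{\ast},\,\nu(\xi)=0}\bigwedge^{\ast}\C^{n}$; in particular it is nonzero in some degree if and only if it is nonzero in degree $0$, i.e. $H^{0}(A/\Gamma,L_{\bar\beta})\neq 0$. But a nonzero holomorphic section of a degree-zero line bundle on a torus vanishes nowhere and therefore trivializes the bundle, so non-triviality of $L_{\bar\beta}$ forces $H^{0}(A/\Gamma,L_{\bar\beta})=0$, whence all the $H^{0,q}$ vanish.

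I expect this last step --- the clean set-up of the Fourier diagonalization on the complex torus, and the remark that the Koszul structure forces the higher $H^{0,q}$ to vanish once $H^{0,0}$ does --- to be the only real point; the Lie algebra part and the reduction to line bundles are routine. An alternative to the Fourier computation: equip $L_{\bar\beta}$ with the Hermitian metric of weight twice the $\R$-linear extension of $\gamma\mapsto\log|\bar\beta(\gamma)|$; its Chern curvature vanishes (the weight is pluriharmonic), so over the flat K\"ahler torus Bochner--Kodaira identifies $\Delta_{\bar\partial}$ with a multiple of the Laplacian of a flat connection on $L_{\bar\beta}$ whose holonomy is trivial precisely when $L_{\bar\beta}$ is holomorphically trivial, and harmonic forms with values in a non-trivial flat line bundle over a torus vanish.
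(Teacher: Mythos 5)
Your proof is correct, and the inductive skeleton is exactly the paper's: both arguments filter $V_{\sigma}=V_{\beta}\otimes V_{\phi}$ by an $A$-stable flag with all successive quotients $V_{\beta}$, pass to the short exact sequence of Dolbeault complexes, and reduce by the long exact sequence to the case $\dim V_{\sigma}=1$ (the five lemma you invoke is not needed for a vanishing statement --- two-out-of-three in the long exact sequence suffices, which is all the paper uses here). The genuine difference is the base case: the paper simply cites \cite{Kd} for the vanishing of $H^{0,\ast}(A/\Gamma,L_{\bar\beta})$ when $L_{\bar\beta}$ is a non-trivial flat line bundle on the complex torus, whereas you prove it, either by the Fourier--Koszul computation or by the Bochner/harmonic-theory argument. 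Both of your proofs of the base case are standard and sound; your reduction of ``some $H^{0,q}\neq 0$'' to ``$H^{0,0}\neq 0$'' via the Koszul structure, followed by the observation that a nonzero holomorphic section of a degree-zero bundle on a compact torus is nowhere vanishing, is a nice way to avoid computing the eigenvalues $\nu_{j}(\xi)$ explicitly. The one point you should not leave implicit in the Fourier version is that the Fourier decomposition is a completion rather than an algebraic direct sum, so to kill the modes with $\nu(\xi)\neq 0$ you need the contracting homotopy $|\nu(\xi)|^{-2}\sum_{j}\bar\nu_{j}(\xi)\,\iota_{j}$ to preserve smoothness; this follows because $\nu(\xi)$ is affine in $\xi$, hence $|\nu(\xi)|$ is bounded below and grows on its non-vanishing locus, so the homotopy preserves rapid decay of Fourier coefficients. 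With that caveat recorded, your argument buys a self-contained proof where the paper relies on an external reference; the paper's version buys brevity.
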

\begin{proof}
In case $\dim V_{\sigma}=1$, the lemma is proved in \cite{Kd}.

In case $\dim V_{\sigma}=n>1$,  by the triangulation of $\sigma$, we have a $(n-1)$-dimensional $A$-submodule $V_{\sigma^{\prime}}$ such that $V_{\sigma}/V_{\sigma^{\prime}}=V_{\beta}$.
Then we have the exact sequence
\[\xymatrix{
0\ar[r]&  A^{0,\ast}(A/\Gamma,L_{\bar\sigma^{\prime}})\ar[r]& A^{0,\ast}(A/\Gamma,L_{\bar\sigma})\ar[r] & A^{0,\ast}(A/\Gamma,L_{\bar\sigma}/L_{\bar\sigma^{\prime}})\ar[r]&0
}.
\]
Considering the long exact sequence of cohomologies, the lemma follows inductively.
\end{proof}

By similar proof of Lemma \ref{ddecc}, we have the following lemma.
\begin{lemma}\label{DEC}
Let $\sigma:A\to GL(V_{\sigma})$ be a holomorphic representation.
Then we have a basis of $V_{\sigma}$  such that $\sigma$ is represented by
\[\sigma=\bigoplus_{i=1}^{k}\alpha_{i}\otimes\phi_{i}
\]
for  holomorphic characters $\alpha_{i}$  and   holomorphic unipotent representations $\phi_{i}$.
\end{lemma}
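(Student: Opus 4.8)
The plan is to repeat the proof of Lemma \ref{ddecc} line by line, inserting at each step the verification that the characters and representations produced are holomorphic. For a holomorphic character $\alpha$ of $A$, let $W_{\alpha}\subset V_{\sigma}$ be the subspace of all $w$ with $(\sigma(a)-\alpha(a)I)^{n}w=0$ for some positive integer $n$ and all $a\in A$. Since $A$ is abelian, all the operators $\sigma(a)$ commute, so the generalized eigenspaces are simultaneously $\sigma(A)$-invariant and yield a direct sum decomposition $V_{\sigma}=W_{\alpha_{1}}\oplus\cdots\oplus W_{\alpha_{k}}$ indexed by finitely many distinct maps $\alpha_{i}\colon A\to\C^{\ast}$. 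On $W_{\alpha_{i}}$ the operator $\sigma_{i}(a):=(\sigma(a))_{\vert_{W_{\alpha_{i}}}}$ has the form $\alpha_{i}(a)I+n_{a}$ with $n_{a}$ nilpotent, and since the $\sigma_{i}(a)$ commute, comparing semisimple parts in $\sigma_{i}(ab)=\sigma_{i}(a)\sigma_{i}(b)$ via uniqueness of the Jordan decomposition shows each $\alpha_{i}$ is a group homomorphism, hence a character. Write $\sigma=\sigma_{1}\oplus\cdots\oplus\sigma_{k}$.

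Next I would check holomorphy. Since $\sigma$ is holomorphic and $W_{\alpha_{i}}$ is a fixed subspace, $a\mapsto\sigma_{i}(a)$ is holomorphic; from $\mathrm{tr}(\sigma_{i}(a))=(\dim W_{\alpha_{i}})\,\alpha_{i}(a)$ one gets $\alpha_{i}(a)=(\dim W_{\alpha_{i}})^{-1}\mathrm{tr}(\sigma_{i}(a))$, so $\alpha_{i}$ is a holomorphic character. Put $\phi_{i}(a):=\alpha_{i}(a)^{-1}\sigma_{i}(a)=I+\alpha_{i}(a)^{-1}n_{a}$. Then $\phi_{i}$ is holomorphic (because $\alpha_{i}$ is holomorphic and nowhere vanishing), it is a homomorphism $A\to GL(W_{\alpha_{i}})$ since $\alpha_{i}$ and $\sigma_{i}$ are, and each $\phi_{i}(a)$ is unipotent; moreover $\alpha_{i}(a)I$ and $\phi_{i}(a)$ are respectively the semisimple and unipotent parts of $\sigma_{i}(a)$ by uniqueness of the Jordan decomposition (equivalently, one may invoke Lemma \ref{jse} applied to the underlying real abelian group). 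Hence $\sigma_{i}=\alpha_{i}\otimes\phi_{i}$, and choosing a basis of $V_{\sigma}$ adapted to the decomposition $V_{\sigma}=\bigoplus_{i}W_{\alpha_{i}}$ gives $\sigma=\bigoplus_{i=1}^{k}\alpha_{i}\otimes\phi_{i}$ with the $\alpha_{i}$ holomorphic characters and the $\phi_{i}$ holomorphic unipotent representations.

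The only place where this differs from the proof of Lemma \ref{ddecc}, and thus the only point requiring any care, is the holomorphic dependence on $a$ of the eigenvalue characters and of the unipotent parts. This presents no genuine obstacle: holomorphy of $\alpha_{i}$ follows from the trace formula, and holomorphy of $\phi_{i}$ from dividing $\sigma_{i}$ by the nowhere-vanishing holomorphic function $\alpha_{i}$.
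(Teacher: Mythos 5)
Your proof is correct and follows essentially the same route as the paper, which simply asserts Lemma \ref{DEC} "by similar proof of Lemma \ref{ddecc}", i.e.\ the generalized eigenspace decomposition together with the Jordan decomposition. The only content you add beyond the paper's argument is the explicit verification of holomorphy of $\alpha_{i}$ (via the trace) and of $\phi_{i}$ (by dividing by the nowhere-vanishing $\alpha_{i}$), which is exactly the detail the paper leaves implicit.
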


Let $\{V_{\alpha}\}_{\alpha\in {\rm Hom}_{hol}(A,\C^{\ast})}$ be the set of all $1$-dimensional holomorphic representations of $A$ and  ${\mathcal H}_{hol}(A/\Gamma)=\{L_{\bar\alpha}\}$ the set of all the isomorphism classes of 
holomorphic line bundles given by $\{V_{\alpha}\}_{\alpha\in {\rm Hom}_{hol}(A,\C^{\ast})}$.
We notice that the correspondence $\{V_{\alpha}\}_{\alpha\in {\rm Hom}_{hol}(A,\C^{\ast})}\to {\mathcal H}(A/\Gamma)$ is not injective.
We consider the direct sums
\[\bigoplus_{\alpha\in {\rm Hom}_{hol}(A,\C^{\ast})}\bigwedge {\frak a}^{\ast}\otimes V_{\alpha}\otimes V_{\sigma}\]
and 
\[\bigoplus_{L_{\bar\alpha}\in {\mathcal H}_{hol}(A/\Gamma)}A^{0,\ast}(A/\Gamma,L_{\bar\alpha}\otimes L_{\bar\sigma}).\]
\begin{proposition}\label{abis}
The  inclusion
\[\bigoplus_{\alpha\in {\rm Hom}_{hol}(A,\C^{\ast})}\bigwedge {\frak a}^{\ast}\otimes V_{\alpha}\otimes V_{\sigma}\to \bigoplus_{L_{\bar\alpha}\in {\mathcal H}_{hol}(A/\Gamma)}A^{0,\ast}(A/\Gamma,L_{\bar\alpha}\otimes L_{\bar\sigma})\]
induces a cohomology isomorphism.
\end{proposition}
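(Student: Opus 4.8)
The plan is to run the argument of Proposition \ref{abisR} in the holomorphic/Dolbeault setting, using the complex analogues of its three ingredients, all of which are now at hand: the decomposition Lemma \ref{DEC}, the Dolbeault Nomizu-type isomorphism Theorem \ref{dolni}, and the Dolbeault vanishing Lemma \ref{aaag}. First I would fix, via Lemma \ref{DEC}, a basis of $V_{\sigma}$ in which $\sigma=\bigoplus_{i=1}^{k}\alpha_{i}\otimes\phi_{i}$ with $\alpha_{i}$ holomorphic characters and $\phi_{i}$ holomorphic unipotent representations. Substituting this into the two direct sums gives
\[\bigoplus_{\alpha}\bigwedge {\frak a}^{\ast}\otimes V_{\alpha}\otimes V_{\sigma}=\bigoplus_{\alpha}\bigwedge {\frak a}^{\ast}\otimes \bigoplus_{i=1}^{k}V_{\alpha\alpha_{i}}\otimes V_{\phi_{i}},\]
where $\alpha$ runs over ${\rm Hom}_{hol}(A,\C^{\ast})$, together with the analogous rewriting $L_{\bar\alpha}\otimes L_{\bar\sigma}=\bigoplus_{i=1}^{k}L_{\bar\alpha}\otimes L_{\bar\alpha_{i}}\otimes L_{\bar\phi_{i}}$ on the Dolbeault side; the given inclusion respects these decompositions summand by summand.

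The heart of the argument is then a reindexing combined with the two vanishing statements. On the Lie-algebra side each summand $V_{\alpha\alpha_{i}}\otimes V_{\phi_{i}}$ is a character tensored with a unipotent representation, so by Lemma \ref{aaag} its cohomology vanishes unless $\alpha\alpha_{i}$ is trivial, i.e.\ $\alpha=\alpha_{i}^{-1}$; hence the total cohomology collapses to $\bigoplus_{i=1}^{k}H^{\ast}({\frak a},V_{\phi_{i}})\cong H^{\ast}({\frak a},\bigoplus_{i=1}^{k}V_{\phi_{i}})$. On the Dolbeault side, since $\alpha\mapsto L_{\bar\alpha}$ is a group homomorphism and tensoring by the fixed class $L_{\bar\alpha_{i}}$ is a bijection of ${\mathcal H}_{hol}(A/\Gamma)$ onto itself, Lemma \ref{aaag} annihilates every summand except the one with $L_{\bar\alpha}\otimes L_{\bar\alpha_{i}}$ trivial, leaving $\bigoplus_{i=1}^{k}H^{0,\ast}(A/\Gamma,L_{\bar\phi_{i}})\cong H^{0,\ast}(A/\Gamma,\bigoplus_{i=1}^{k}L_{\bar\phi_{i}})$. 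Finally Theorem \ref{dolni}, applied to each unipotent $\phi_{i}$ over the complex torus $A/\Gamma$, identifies the latter with $\bigoplus_{i=1}^{k}H^{\ast}({\frak a},V_{\phi_{i}})$, and because under all of these identifications the given inclusion becomes a direct sum of copies of the inclusion of Theorem \ref{dolni}, it induces an isomorphism on cohomology. Thus the proposition follows.

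The one point that requires care is the bookkeeping forced by the non-injectivity of the correspondence $\{V_{\alpha}\}\to{\mathcal H}_{hol}(A/\Gamma)$: after collapsing, one must verify that the surviving terms on the two sides are matched correctly, with neither double-counting nor omission. This is exactly ensured by the observation that multiplication by a fixed character, respectively tensoring by a fixed line bundle, is a bijection on the relevant index set. Apart from this, the proof is a transcription of the proof of Proposition \ref{abisR}, with Theorem \ref{nill} replaced by Theorem \ref{dolni} and Lemma \ref{aaag1} by Lemma \ref{aaag}.
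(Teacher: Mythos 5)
Your proposal is correct and coincides with the paper's proof, which likewise establishes Proposition \ref{abis} by transcribing the argument of Proposition \ref{abisR} with Theorem \ref{dolni}, Lemma \ref{aaag}, and Lemma \ref{DEC} in place of their real counterparts. You have simply written out in more detail the reindexing and matching of surviving summands that the paper leaves implicit.
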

\begin{proof}
By using Theorem \ref{dolni} and Lemma \ref{aaag} and \ref{DEC}, we can prove the proposition by similar argument of the proof of Proposition \ref{abisR}
\end{proof}
\subsection{Mostow bundle and spectral sequence}\label{Spe}
Let $G$ be  a simply connected complex solvable Lie group with a lattice $\Gamma$ and $\g$ be the Lie algebra of $G$.
Then
the mostow  bundle
\[\xymatrix{
N/\Gamma\cap N=N\Gamma/\Gamma\ar[r]& G/\Gamma\ar[r]  &G/N\Gamma=(G/N)/(\Gamma/\Gamma\cap N)
}
\]
is holomorphic.

Let $\sigma:G\to GL(V_{\sigma})$ be a representation such that the restriction $\sigma_{\vert_{N}}$ is a unipotent representation.
For the Mostow bundle $p:G/\Gamma\to (G/N)/(\Gamma/\Gamma\cap N)$,
we define the vector bundle 
\[{\bf H}^{0,q}(N/\Gamma\cap N)=\sqcup_{x\in  (G/N)/(\Gamma/\Gamma\cap N)}H^{0,q}(p^{-1}(x),L_{\bar\sigma})\]
 over the torus $(G/N)/(\Gamma/\Gamma\cap N)$.
By Theorem \ref{dolni}, 
 we have $H^{0,q}(p^{-1}(x), L_{\bar\sigma})\cong H^{0,q}(\n , V_{\sigma})$.
Hence let $\Lambda_{q}:G/N\to GL(H^{q}(\n , V_{\sigma}))$    be the representation induced by the extension $1\to N\to G\to G/N\to 1$, then
 we can regard ${\bf H}^{0,q}(N/\Gamma\cap N)$ as the flat holomorphic bundle $L_{\bar\Lambda}$.
We consider the filtration 
\[F^{p}\bigwedge ^{p+q}\g^{\ast}=\{\omega\in \bigwedge^{p+q} \g^{\ast}\vert \omega(X_{1},\dots, X_{p+1})=0 \,\, {\rm for}\,\, X_{1},\dots, X_{p+1}\in\n\}.
\]
This filtration gives the filtration of the cochain complex $\bigwedge \g^{\ast}\otimes V_{\sigma}$ and the filtration of the Dolbeault complex $A^{0,\ast}(G/\Gamma,L_{\bar\sigma})=C^{\infty}(G/\Gamma,L_{\bar\sigma})\otimes \bigwedge \g^{\ast}$.
We consider the spectral sequence $\,_{Dol}E^{\ast,\ast}_{\ast}(\g)$ of $\bigwedge \g^{\ast}\otimes V_{\sigma}$ and the spectral sequence $\,_{Dol}E^{\ast,\ast}_{\ast}(G/\Gamma)$ of $A^{0,\ast}(G/\Gamma,L_{\bar\sigma})$.
Set $G/N=A$ and $\Gamma/\Gamma\cap N=\Delta$ and $\frak a=\g/\n$.
By Borel's result \cite[Appendix 2]{Hir}, we have the commutative diagram
\[\xymatrix{
	\,_{Dol}E_{1}^{\ast,q}(\g)\ar[r]\ar[d]^{\cong}& \,_{Dol}E_{1}^{\ast,q}(G/\Gamma)\ar[d]^{\cong} \\
	\bigwedge {\frak a}^{\ast}\otimes V_{\Lambda_{q}} \ar[r]&A^{0,\ast}(A/\Delta,L_{\bar\Lambda_{q}}).
 }
\]
\subsection{Proof of theorem}
\begin{proof}

Consider the spectral sequence $\,_{Dol}E_{\ast}^{\ast,\ast}(\g)$ of  
\[\bigoplus_{\alpha\in{\mathcal B}_{(G,N)}} \bigwedge \g^{\ast}\otimes V_{\alpha}\otimes V_{\sigma}\]
and the spectral sequence  $\,_{Dol}E_{\ast}^{\ast,\ast}(G/\Gamma)$ of
\[\bigoplus_{L_{\bar\alpha}\in{\mathcal B}_{(G,N)}(\Gamma)} A^{0,\ast}(G/\Gamma, L_{\bar\alpha}\otimes L_{\bar\sigma}).
\]
Set $A=G/N$ and $\Delta=\Gamma/\Gamma\cap N$ and $\frak a=\g/\n$.
Since we can identify ${\mathcal B}_{(G,N)}$ (resp. ${\mathcal B}_{(G,N)}(\Gamma) $) with ${\rm Hom}_{hol}(A,\C^{\ast})$ (resp. ${\mathcal H}_{hol}(A/\Delta)$ as Section \ref{To}).   we have the commutative diagram
\[\xymatrix{
	\,_{Dol}E_{1}^{\ast,\ast}(\g)\ar[r]\ar[d]^{\cong}& \,_{Dol}E_{1}^{\ast,\ast}(G/\Gamma)\ar[d]^{\cong} \\
	\bigoplus_{\alpha\in {\rm Hom}_{hol}(A,\C^{\ast})} \bigwedge {\frak a}^{\ast}\otimes V_{\alpha}\otimes V_{\Lambda}  \ar[r]&\bigoplus_{L_{\alpha}\in{\mathcal H}_{hol}(A/\Delta)} A^{0,\ast}(A/\Delta,L_{\bar\alpha}\otimes L_{\bar\Lambda}). 
 }
\]
By Proposition \ref{abis}, the homomorphism $\,_{Dol}E_{1}^{\ast,\ast}(\g)\to \,_{Dol}E_{1}^{\ast,\ast}(G/\Gamma)$ induces a cohomology isomorphism and hence we have an isomorphism $\,_{Dol}E_{2}^{\ast,\ast}(\g)\cong \,_{Dol}E_{2}^{\ast,\ast}(G/\Gamma)$.
Hence the theorem follows.
\end{proof}

\section{Construction of finite cochain complex (de Rham case)}\label{cos}

We will use the following proposition.
\begin{proposition}{\rm (\cite[Proposition 3.3]{dek})}\label{Cde}
Let $G$ be a simply connected  solvable Lie group $G$ and $N$  the nilradical  of $G$.
Then we have a simply connected nilpotent subgroup $C\subset G$ such that $G=C\cdot N$.
\end{proposition}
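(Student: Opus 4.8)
The plan is to obtain $C$ as the analytic subgroup of $G$ attached to a Cartan subalgebra of $\g$, after first checking the corresponding decomposition at the Lie algebra level.

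First I would pass to the Lie algebra $\g$ with nilradical $\n$, and fix a Cartan subalgebra $\frak h$ of $\g$. By definition a Cartan subalgebra is nilpotent, so $\frak h$ is a nilpotent subalgebra, and the point to establish is
\[
\g=\frak h+\n .
\]
To see this I would complexify: it suffices to prove $\g_{\C}=\frak h_{\C}+\n_{\C}$, since this descends to $\R$ by a dimension count once one notes that $\frak h_{\C}$ is a Cartan subalgebra of $\g_{\C}$ and (the nilradical being a characteristic ideal, hence defined over $\R$) $\n_{\C}$ is the nilradical of $\g_{\C}$. Over $\C$, use the generalized weight space decomposition $\g_{\C}=\bigoplus_{\lambda\in\frak h_{\C}^{\ast}}\g_{\lambda}$ for the adjoint action of $\frak h_{\C}$, in which $\g_{0}=\frak h_{\C}$ because $\frak h$ is self-normalizing. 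For each $\lambda\neq 0$ pick $h\in\frak h_{\C}$ with $\lambda(h)\neq 0$; then ${\rm ad}\,h$ acts invertibly on $\g_{\lambda}$, so $\g_{\lambda}=[h,\g_{\lambda}]\subseteq[\g_{\C},\g_{\C}]$. Since $\g$ is solvable, $[\g_{\C},\g_{\C}]$ is a nilpotent ideal (a standard corollary of Lie's theorem), hence contained in $\n_{\C}$. Therefore $\bigoplus_{\lambda\neq 0}\g_{\lambda}\subseteq\n_{\C}$ and $\g_{\C}=\frak h_{\C}\oplus\bigoplus_{\lambda\neq 0}\g_{\lambda}\subseteq\frak h_{\C}+\n_{\C}$, as wanted.

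Next, let $C$ be the connected (analytic) subgroup of $G$ with Lie algebra $\frak h$. Since $\frak h$ is nilpotent, $C$ is a nilpotent Lie group; and because $G$ is simply connected solvable, every analytic subgroup of $G$ is automatically closed and simply connected, so $C$ is a simply connected nilpotent subgroup. Finally, as $N$ is normal in $G$, the product $CN$ is a subgroup of $G$; it is connected and its Lie algebra is $\frak h+\n=\g$, so $CN$ contains a neighbourhood of the identity, hence is open, hence closed, hence all of $G$ by connectedness. This yields $G=C\cdot N$ with $C$ as required.

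Most of this is assembling classical facts (the weight space decomposition relative to a Cartan subalgebra, nilpotence of $[\g,\g]$ for solvable $\g$). The two places needing a little care, and where I expect the only genuine work, are: (i) justifying that the identity $\g=\frak h+\n$ may be verified after complexification, which forces one to check that $\frak h_{\C}$ remains Cartan and $\n_{\C}$ remains the nilradical; and (ii) upgrading the subalgebra $\frak h$ to an honest closed, simply connected subgroup $C$, which rests on the structural theorem that analytic subgroups of a simply connected solvable Lie group are closed and simply connected — this is the one non-elementary input, and would be the main obstacle if one wished to argue from scratch rather than cite it.
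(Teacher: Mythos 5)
Your argument is correct, and it is worth saying up front that the paper itself offers no proof of this proposition: it is quoted verbatim from Dekimpe (\cite[Proposition 3.3]{dek}), with Remark 2 of the paper only recording that the group-level statement comes from a Lie-algebra decomposition $\g=\frak c+\n$ with $\frak c$ nilpotent (\cite[Theorem 2.2]{Dek}). Your proof supplies exactly the content behind that citation, and by essentially the same mechanism: Dekimpe's nilpotent complement $\frak c$ is a Cartan subalgebra, and the identity $\g=\frak h+\n$ for a Cartan subalgebra $\frak h$ of a solvable $\g$ is the classical fact you prove via the generalized weight space decomposition, $\g_{0}=\frak h_{\C}$, invertibility of ${\rm ad}\,h$ on $\g_{\lambda}$ for $\lambda\neq 0$, and $[\g,\g]\subseteq\n$. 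All the steps check out, including the two you flag: Cartan subalgebras and the nilradical are both stable under extension of the base field in characteristic zero, so the dimension count descends to $\R$; and the passage from $\frak h+\n=\g$ to $G=C\cdot N$ via openness of the subgroup $CN$ (legitimate since $N$ is normal) together with closedness and simple connectedness of analytic subgroups of a simply connected solvable group is the standard route. One small bonus of your argument: because it works verbatim over $\C$ (the weight-space computation is done there anyway), it also delivers the content of the paper's Remark 2, namely that when $G$ is a complex Lie group the subgroup $C$ may be taken complex. So the comparison is not "different route versus the paper's route" but "an actual proof versus a citation"; your route is the expected one.
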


\begin{remark}\label{CCO}
This proposition is given by the decomposition (not necessarily direct sum) $\g={\frak c}+\n$ (see \cite[Theorem 2.2]{Dek}).
Since this decomposition is compatible with any field (see \cite[Theorem 2.2]{Dek}), if $G$ is complex Lie group we can take a subgroup $C$ also complex.
\end{remark}

Let $G$ be a simply connected solvable Lie group and $\g$ be the Lie algebra of $G$.
Let $N$ be the nilradical  of $G$.
Let $\rho:G\to GL(V_{\rho})$ be a representation.
Suppose the restriction $\rho_{\vert_{N}}$ is unipotent.
We consider the direct sum
\[\bigoplus_{\alpha\in {\mathcal A}_{(G,N)}}\bigwedge {\frak g}^{\ast}_{\C}\otimes V_{\alpha}\otimes V_{\rho}.\]
Then we have the $G$-action on this cochain complex via $\bigoplus {\rm Ad}\otimes \alpha\otimes \rho$.
Since this action is extension of the Lie derivation,
the induced action on the cohomology is trivial.
Consider  the semi-simple part 
\[((\bigoplus_{\alpha\in{\mathcal A}_{(G,N)}} {\rm Ad}\otimes \alpha\otimes \rho)(g))_{s}=\bigoplus_{\alpha\in{\mathcal A}_{(G,N)}} ({\rm Ad}_{g})_{s}\otimes \alpha(g)\otimes (\rho(g))_{s}.
\]

Take a simply connected nilpotent subgroup $C\subset G$ as Proposition \ref{Cde}.
 Since $C$ is nilpotent, the map 
\[\Phi:C\ni c \mapsto \bigoplus_{\alpha\in{\mathcal A}_{(G,N)}} ({\rm Ad}_{g})_{s}\otimes \alpha(g)\otimes (\rho(g))_{s} \in {\rm Aut}(\bigoplus_{\alpha\in{\mathcal A}_{(G,N)}}\bigwedge {\frak g}^{\ast}_{\C}\otimes V_{\alpha}\otimes V_{\rho})\]
is a homomorphism.
We denote by 
\[(\bigoplus_{\alpha\in{\mathcal A}_{(G,N)}}\bigwedge {\frak g}^{\ast}_{\C}\otimes V_{\alpha}\otimes V_{\rho})^{\Phi(C)}
\]
the subcomplex consisting of the $\Phi(C)$-invariant elements.
\begin{lemma}\label{incis}
The inclusion
\[(\bigoplus_{\alpha\in{\mathcal A}_{(G,N)}}\bigwedge {\frak g}^{\ast}_{\C}\otimes V_{\alpha}\otimes V_{\rho})^{\Phi(C)}\subset \bigoplus_{\alpha\in{\mathcal A}_{(G,N)}}\bigwedge {\frak g}^{\ast}_{\C}\otimes V_{\alpha}\otimes V_{\rho}
\]
induces a cohomology isomorphism.
\end{lemma}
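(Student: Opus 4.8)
The plan is to exploit the fact that $\Phi(C)$ is a group of automorphisms of the cochain complex acting trivially on cohomology, and then to run an averaging/semisimplicity argument to replace the full complex by its $\Phi(C)$-invariants. The key structural input is that each automorphism $\Phi(c)$ is the semisimple part of the operator $\bigoplus_{\alpha}(\mathrm{Ad}_g)_s\otimes\alpha(g)\otimes(\rho(g))_s$, so $\Phi(C)$ consists of commuting semisimple automorphisms of the finite-dimensional graded vector space $\bigoplus_{\alpha}\bigwedge\g^\ast_\C\otimes V_\alpha\otimes V_\rho$ (here one restricts attention, grading by grading, to the finitely many $\alpha$ that actually contribute, so that in each degree the relevant space is finite-dimensional even though the full direct sum is not). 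By Lemma~\ref{jse} applied to the nilpotent group $C$, $\Phi$ is diagonalizable, i.e. the complex decomposes as a direct sum of weight spaces for $\Phi(C)$.

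First I would check that $\Phi(C)$ really does act on the complex by \emph{cochain} automorphisms, i.e. commutes with the differential. This follows because the full $G$-action $\bigoplus\mathrm{Ad}\otimes\alpha\otimes\rho$ is by cochain maps (it extends the Lie derivative), hence so is its semisimple part: the Jordan decomposition is functorial and the differential is $G$-equivariant, so $d$ commutes with $((\cdots)(g))_s$ for each $g$, and in particular with $\Phi(c)$ for $c\in C$. Next I would use that the induced action of $\Phi(C)$ on cohomology is trivial: the full $G$-action on cohomology is trivial (stated in the excerpt, since it extends the Lie derivative, which is null-homotopic), and $\Phi(C)$-action on cohomology is obtained by taking semisimple parts of the induced maps on cohomology, which are all the identity, hence have trivial semisimple part.

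Now the main step. Decompose $M^\ast:=\bigoplus_{\alpha}\bigwedge\g^\ast_\C\otimes V_\alpha\otimes V_\rho$ into $\Phi(C)$-weight subcomplexes $M^\ast=\bigoplus_\lambda M^\ast_\lambda$, where $M^\ast_0=(M^\ast)^{\Phi(C)}$. Since each $\Phi(c)$ is a cochain automorphism, each $M^\ast_\lambda$ is a subcomplex, and $H^\ast(M^\ast)=\bigoplus_\lambda H^\ast(M^\ast_\lambda)$. On $H^\ast(M^\ast_\lambda)$ every $\Phi(c)$ acts simultaneously by the scalar $\lambda(c)$ (by equivariance of the decomposition) and by the identity (triviality on cohomology); hence $\lambda(c)=1$ for all $c$ whenever $H^\ast(M^\ast_\lambda)\neq 0$, forcing $H^\ast(M^\ast_\lambda)=0$ for $\lambda\neq 0$. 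Therefore $H^\ast(M^\ast)=H^\ast(M^\ast_0)=H^\ast((M^\ast)^{\Phi(C)})$, and this identification is induced by the inclusion, which is exactly the claim.

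The step I expect to be the main obstacle is the finite-dimensionality bookkeeping needed to invoke Lemma~\ref{jse}: the direct sum over all $\alpha\in\mathcal{A}_{(G,N)}$ is infinite-dimensional, so "diagonalizable" and "weight-space decomposition" must be justified one homogeneous piece at a time, after observing that in a fixed form-degree only finitely many summands $V_\alpha\otimes V_\rho$ carry a given weight $\lambda$ for $\Phi(C)$ — or, more simply, that $\Phi(c)$ acts on the summand indexed by $\alpha$ through $\alpha(c)$ together with the (finite-dimensional) factor $(\mathrm{Ad}_c)_s\otimes(\rho(c))_s$, so the decomposition into joint eigenspaces is perfectly well-defined summandwise and assembles into a decomposition of the whole complex. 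Once this is set up cleanly, the rest is the standard "group acting trivially on cohomology, pass to invariants" argument.
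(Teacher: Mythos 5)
Your proposal is correct and follows essentially the same route as the paper: the induced $G$-action on cohomology is trivial, hence so is that of its semisimple part $\Phi(C)$, and the diagonalizability of $\Phi$ splits the complex into weight subcomplexes so that the non-invariant part is acyclic. The paper compresses this into the two identities $H^{\ast}(\cdot)^{\Phi(C)}=H^{\ast}(\cdot)$ and $H^{\ast}\bigl((\cdot)^{\Phi(C)}\bigr)=H^{\ast}(\cdot)^{\Phi(C)}$ without the summand-by-summand bookkeeping you supply, but the content is the same.
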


\begin{proof}
Since the induced $G$-action on the cohomology $H^{\ast}(\bigoplus_{\alpha\in{\mathcal A}_{(G,N)}}\bigwedge {\frak g}^{\ast}_{\C}\otimes V_{\alpha}\otimes V_{\rho})$ is trivial  and  $\Phi(C)$-action is semi-simple part of $G$-action, the induced $\Phi(C)$-action on the  cohomology $H^{\ast}(\bigoplus_{\alpha\in{\mathcal A}_{(G,N)}}\bigwedge {\frak g}^{\ast}_{\C}\otimes V_{\alpha}\otimes V_{\rho})$ is also trivial and hence
\[H^{\ast}(\bigoplus_{\alpha\in{\mathcal A}_{(G,N)}}\bigwedge {\frak g}^{\ast}_{\C}\otimes V_{\alpha}\otimes V_{\rho})^{\Phi(C)}=H^{\ast}(\bigoplus_{\alpha\in{\mathcal A}_{(G,N)}}\bigwedge {\frak g}^{\ast}_{\C}\otimes V_{\alpha}\otimes V_{\rho}).
\]
Since $\Phi$ is diagonalizable, we have
\[H^{\ast}((\bigoplus_{\alpha\in{\mathcal A}_{(G,N)}}\bigwedge {\frak g}^{\ast}_{\C}\otimes V_{\alpha}\otimes V_{\rho})^{\Phi(C)})=H^{\ast}(\bigoplus_{\alpha\in{\mathcal A}_{(G,N)}}\bigwedge {\frak g}^{\ast}_{\C}\otimes V_{\alpha}\otimes V_{\rho})^{\Phi(C)}.
\]
Hence the lemma follows.
\end{proof}

The subcomplex $(\bigoplus_{\alpha\in{\mathcal A}_{(G,N)}}\bigwedge {\frak g}^{\ast}_{\C}\otimes V_{\alpha}\otimes V_{\rho})^{\Phi(C)}$ is desired subcomplex $A^{\ast}$ as in Theorem \ref{finnn}.
By using certain basis, we see that this complex is finite dimensional and write down the subcomplex $A^{\ast}_{\Gamma}$ as Corollary \ref{Compu} explicitly.

We have a basis $X_{1},\dots,X_{n}$ of $\g_{\C}$ such that $({\rm Ad}_{c})_{s}={\rm diag} (\alpha_{1}(c),\dots,\alpha_{n}(c))$ for $c\in C$.
Let $x_{1},\dots, x_{n}$ be the basis of $\g_{\C}^{\ast}$ which is dual to $X_{1},\dots ,X_{n}$.
We have a basis $v_{1},\dots ,v_{m}$ of $V_{\rho}$ such that 
$(\rho(c))_{s}={\rm diag}(\alpha_{1}^{\prime}(c),\dots ,\alpha^{\prime}_{m}(c))$ for any $c\in C$.
Let $v_{\alpha}$ be a basis of $V_{\alpha}$ for each character $\alpha \in{\mathcal A}_{(G,N)}$.
By $G=C\cdot N$, we have $G/N=C/C\cap N$ and hence we have ${\mathcal A}_{(G,N)}={\mathcal A}_{C,C\cap N}=\{\alpha\in {\rm Hom}(C,\C^{\ast}) \vert \alpha_{\vert_{C\cap N}}=1\}$.

For a multi-index $I=\{i_{1},\dots ,i_{p}\}$ we write $x_{I}=x_{i_{1}}\wedge\dots \wedge x_{i_{p}}$,  and $\alpha_{I}=\alpha_{i_{1}}\cdots \alpha_{i_{p}}$.
We consider the basis 
\[\{ x_{I} \otimes v_{\alpha}\otimes v_{k}\}_{I\subset \{1,\dots, n\}, \alpha\in {\mathcal A}_{C,C\cap N}, k\in \{1,\dots, m \}}
\]
of $\bigoplus_{\alpha\in {\mathcal A}_{C,C\cap N}}\bigwedge {\frak g}^{\ast}_{\C}\otimes V_{\alpha}\otimes V_{\rho}$.
Since the action
\[\Phi:C\to{\rm Aut}(\bigoplus_{\alpha}\bigwedge {\frak g}^{\ast}\otimes V_{\alpha}\otimes V_{\rho})\]
is  the semi-simple part of $(\bigoplus {\rm Ad}\otimes \alpha\otimes \rho)_{\vert_C}$, we have
\[
\Phi(a)(x_{I} \otimes v_{\alpha}\otimes v_{k})
=\alpha^{-1}_{I}\alpha \alpha_{k}^{\prime} x_{I}\otimes v_{\alpha}\otimes v_{k}.
\]

Hence we have
\begin{multline*}
((\bigoplus_{\alpha}\bigwedge {\frak g}^{\ast}_{\C}\otimes V_{\alpha}\otimes V_{\rho})^{\Phi(C)}\\
=\langle x_{I}\otimes v_{\alpha_{I}\alpha^{\prime-1}_{k}} \otimes v_{k}\otimes \rangle_{I\subset \{1,\dots, n\},  k\in \{1,\dots, m \}}\\
=\bigwedge \langle x_{1}\otimes v_{\alpha_{1}},\dots, x_{n} \otimes v_{\alpha_{n}} \rangle \otimes \langle v_{\alpha^{\prime -1}_{1}}\otimes v_{1}, \dots ,v_{\alpha^{\prime -1}_{m}}\otimes v_{m} \rangle.
\end{multline*}
Finally we construct a finite dimensional complex $A_{\Gamma}^{\ast}$ which computes the de Rham cohomology $H^{\ast}(G/\Gamma,  E_{\rho})$.

\begin{corollary}
Let $A^{\ast}_{\Gamma}$ be the subcomplex of $(\bigoplus_{\alpha}\bigwedge {\frak g}_{\C}^{\ast}\otimes V_{\alpha}\otimes V_{\rho})^{\Phi(C)}$ defined as
\[A^{\ast}_{\Gamma}=\langle x_{I}\otimes v_{\alpha_{I}\alpha^{\prime-1}_{k}} \otimes v_{k}\vert (\alpha_{I}\alpha^{\prime-1}_{k} )_{\vert_{\Gamma}}=1\rangle.
\]
Then we have an isomorphism 
\[H^{\ast}(A^{\ast}_{\Gamma})\cong H^{\ast}(G/\Gamma,  E_{\rho}).
\]
\end{corollary}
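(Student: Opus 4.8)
The plan is to recognize the explicit complex $A^{\ast}_{\Gamma}$ as exactly the preimage $\iota^{-1}(A^{\ast}(G/\Gamma,E_{\rho}))$ appearing in Corollary \ref{Compu}, so that the assertion reduces to that corollary. There are thus only two things to verify: first, that $\iota$ respects a block decomposition for which the summand of the trivial flat line bundle on the target is $A^{\ast}(G/\Gamma,E_{\rho})$, so that the quasi-isomorphism of Theorem \ref{finnn} restricts to a quasi-isomorphism onto that summand; and second, that under the explicit $\Phi(C)$-invariant basis computed just above, the source of that block is precisely $\langle x_{I}\otimes v_{\alpha_{I}\alpha^{\prime-1}_{k}}\otimes v_{k}\mid (\alpha_{I}\alpha^{\prime-1}_{k})_{\vert_{\Gamma}}=1\rangle$.

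For the first point I would index both sides by the restricted character $\alpha_{\vert_{\Gamma}}\in {\rm Hom}(\Gamma,\C^{\ast})$. On the source $A^{\ast}=(\bigoplus_{\alpha\in {\mathcal A}_{(G,N)}}\bigwedge \g^{\ast}_{\C}\otimes V_{\alpha}\otimes V_{\rho})^{\Phi(C)}$ this is legitimate because the Chevalley--Eilenberg differential and each endomorphism $\Phi(c)$ preserve every $V_{\alpha}$-summand, hence also the coarser decomposition obtained by grouping all characters $\alpha$ with one common restriction to $\Gamma$. On the target $\bigoplus_{E_{\alpha}\in {\mathcal A}_{(G,N)}(\Gamma)}A^{\ast}(G/\Gamma,E_{\alpha}\otimes E_{\rho})$ it is legitimate because the summand $A^{\ast}(G/\Gamma,E_{\alpha}\otimes E_{\rho})$ depends only on the isomorphism class of the flat line bundle $E_{\alpha}$, that is, only on $\alpha_{\vert_{\Gamma}}$ --- here I use that $G$ is simply connected, so $\Gamma=\pi_{1}(G/\Gamma)$ and a flat line bundle on $G/\Gamma$ is trivial precisely when its monodromy character is trivial. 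With respect to these matching decompositions $\iota$ is block-diagonal, the block indexed by the trivial character on the target is $A^{\ast}(G/\Gamma,E_{\rho})$, and since cohomology commutes with direct sums, a block-diagonal quasi-isomorphism (which $\iota$ is, by Theorem \ref{finnn} together with Theorem \ref{drr} and Lemma \ref{incis}) restricts to a quasi-isomorphism on each block. The source of the trivial block is, by construction, $\iota^{-1}(A^{\ast}(G/\Gamma,E_{\rho}))$; this step is exactly the argument behind Corollary \ref{Compu}.

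For the second point I would invoke the explicit basis $\{x_{I}\otimes v_{\alpha_{I}\alpha^{\prime-1}_{k}}\otimes v_{k}\}$ of $(\bigoplus_{\alpha}\bigwedge \g^{\ast}_{\C}\otimes V_{\alpha}\otimes V_{\rho})^{\Phi(C)}$ established above: the vector $x_{I}\otimes v_{\alpha_{I}\alpha^{\prime-1}_{k}}\otimes v_{k}$ lives in the $V_{\alpha}$-summand with $\alpha=\alpha_{I}\alpha^{\prime-1}_{k}$, hence in the block indexed by $(\alpha_{I}\alpha^{\prime-1}_{k})_{\vert_{\Gamma}}$, and $\iota$ carries it into $A^{\ast}(G/\Gamma,E_{\alpha_{I}\alpha^{\prime-1}_{k}}\otimes E_{\rho})$. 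Therefore the trivial block of the source --- equivalently $\iota^{-1}(A^{\ast}(G/\Gamma,E_{\rho}))$ --- is spanned exactly by those basis vectors satisfying $(\alpha_{I}\alpha^{\prime-1}_{k})_{\vert_{\Gamma}}=1$, which is the complex $A^{\ast}_{\Gamma}$ of the statement. Combining the two points gives $H^{\ast}(A^{\ast}_{\Gamma})\cong H^{\ast}(G/\Gamma,E_{\rho})$.

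I expect the only genuinely delicate part to be the indexing in the first point, because the correspondence ${\mathcal A}_{(G,N)}\to {\mathcal A}_{(G,N)}(\Gamma)$ fails to be injective (the Remark after Theorem \ref{doll}): several distinct characters $\alpha$ feed into the same target summand, so $A^{\ast}_{\Gamma}$ properly contains the $\alpha$-trivial part, whereas one must also be sure that a basis vector with $(\alpha_{I}\alpha^{\prime-1}_{k})_{\vert_{\Gamma}}\neq 1$ lands in the summand of a \emph{non-trivial} flat line bundle and hence never meets $A^{\ast}(G/\Gamma,E_{\rho})$. Once the two index sets are matched through $\alpha\mapsto\alpha_{\vert_{\Gamma}}$, the remainder is formal and rests only on Theorem \ref{drr}, Lemma \ref{incis}, Proposition \ref{Cde} and the diagonalizability of $\Phi$ recorded earlier.
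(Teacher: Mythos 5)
Your proposal is correct and follows essentially the same route as the paper: both identify $A^{\ast}_{\Gamma}$ with $\iota^{-1}(A^{\ast}(G/\Gamma,E_{\rho}))$ by noting that the basis vector $x_{I}\otimes v_{\alpha_{I}\alpha^{\prime-1}_{k}}\otimes v_{k}$ lands in the summand of $E_{\alpha_{I}\alpha^{\prime-1}_{k}}\otimes E_{\rho}$, which is the trivial-bundle summand exactly when $(\alpha_{I}\alpha^{\prime-1}_{k})_{\vert_{\Gamma}}=1$, and then invoke Corollary \ref{Compu}. You merely spell out in more detail the block decomposition by restricted characters and why the quasi-isomorphism restricts blockwise, which the paper leaves implicit.
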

\begin{proof}
Consider the inclusion 
\[\iota:(\bigoplus_{\alpha\in {\mathcal A}_{G, N}}\bigwedge {\frak g}^{\ast}_{\C}\otimes V_{\alpha}\otimes V_{\rho})^{\Phi(C)}\to  \bigoplus_{E_{\alpha}\in {\mathcal A}_{(G,N)}(\Gamma)}A^{\ast}(G/\Gamma, E_{\alpha}\otimes E_{\rho}).\]
$\iota(x_{I}\otimes v_{\alpha_{I}\alpha^{\prime-1}_{k}} \otimes v_{k})\in A^{\ast}(G/\Gamma,  E_{\rho})$ if and only if $(\alpha_{I}\alpha^{\prime-1}_{k} \rho)_{\vert_{\Gamma}}=\rho_{\vert_{\Gamma}}$.
Hence we have $\iota^{-1}(A^{\ast}(G/\Gamma,  E_{\rho}))=A^{\ast}_{\Gamma}$.
\end{proof}

\begin{corollary}
We consider the following conditions:

$(\Diamond_{1})$ For each multi-index $I=\{i_{1},\dots ,i_{p}\}$ and $k\in \{1,\dots ,m\}$, the character $\alpha_{I}\alpha^{\prime-1}_{k}$ is trivial if and only if the restriction $({\alpha_{I}\alpha^{\prime-1}_{k}})_{\vert_{\Gamma}}$ is trivial.

$(\Diamond_{2})$For each multi-index $I=\{i_{1},\dots ,i_{p}\}$ and $k\in \{1,\dots ,m\}$, the character $\alpha_{I}\alpha^{\prime-1}_{k}$ is trivial or non-unitary.

If the condition $(\Diamond_{1})$ or $(\Diamond_{2})$ holds,  then we have an isomorphism
\[H^{\ast}(\g, V_{\rho})\cong H^{\ast}(G/\Gamma,  E_{\rho}).
\]

\end{corollary}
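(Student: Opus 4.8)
The plan is to deduce this corollary from the explicit finite complex $A^{\ast}_{\Gamma}$ constructed above together with the isomorphism $H^{\ast}(A^{\ast}_{\Gamma})\cong H^{\ast}(G/\Gamma, E_{\rho})$ of the previous corollary, by showing that under either hypothesis the complex $A^{\ast}_{\Gamma}$ actually coincides with the full $\Phi(C)$-invariant complex $(\bigoplus_{\alpha}\bigwedge\g^{\ast}_{\C}\otimes V_{\alpha}\otimes V_{\rho})^{\Phi(C)}$ restricted to the trivial-character summand, which by Lemma \ref{incis} and Theorem \ref{nill} computes $H^{\ast}(\g, V_{\rho})$. First I would recall from the displayed computation that
\[
(\bigoplus_{\alpha}\bigwedge\g^{\ast}_{\C}\otimes V_{\alpha}\otimes V_{\rho})^{\Phi(C)}=\langle x_{I}\otimes v_{\alpha_{I}\alpha^{\prime-1}_{k}}\otimes v_{k}\rangle_{I,k},
\]
and that $A^{\ast}_{\Gamma}$ is the subcomplex spanned by those $x_{I}\otimes v_{\alpha_{I}\alpha^{\prime-1}_{k}}\otimes v_{k}$ with $(\alpha_{I}\alpha^{\prime-1}_{k})_{\vert_{\Gamma}}=1$. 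Separately, I would observe that the summand of $(\bigoplus_{\alpha}\bigwedge\g^{\ast}_{\C}\otimes V_{\alpha}\otimes V_{\rho})^{\Phi(C)}$ corresponding to the single trivial character $\alpha=1$ is precisely the span of the $x_{I}\otimes v_{\alpha_{I}\alpha^{\prime-1}_{k}}\otimes v_{k}$ with $\alpha_{I}\alpha^{\prime-1}_{k}=1$ as a character of $G$, and that this summand is isomorphic as a complex to $(\bigwedge\g^{\ast}_{\C}\otimes V_{\rho})^{\Phi(C)}$, whose cohomology is $H^{\ast}(\g, V_{\rho})$ (again by Lemma \ref{incis}, applied with the trivial representation in place of the family $\{V_{\alpha}\}$, together with Theorem \ref{nill}; alternatively one simply notes that on this summand the $\Phi(C)$-invariants of $\bigwedge\g^{\ast}_{\C}\otimes V_{\rho}$ already compute its cohomology).

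Next, the key point is that condition $(\Diamond_{1})$ says exactly that the generators of $A^{\ast}_{\Gamma}$ and those of the $\alpha=1$ summand are the same set: $(\alpha_{I}\alpha^{\prime-1}_{k})_{\vert_{\Gamma}}=1$ iff $\alpha_{I}\alpha^{\prime-1}_{k}=1$. Hence under $(\Diamond_{1})$ the inclusion $A^{\ast}_{\Gamma}\hookrightarrow$ (trivial summand) is an equality of complexes, so
\[
H^{\ast}(G/\Gamma, E_{\rho})\cong H^{\ast}(A^{\ast}_{\Gamma})\cong H^{\ast}(\g, V_{\rho}).
\]
For condition $(\Diamond_{2})$, I would show it implies $(\Diamond_{1})$: if $\alpha_{I}\alpha^{\prime-1}_{k}$ is non-unitary, then its image $\alpha_{I}\alpha^{\prime-1}_{k}(\Gamma)\subset\C^{\ast}$ is nontrivial, because a finitely generated subgroup of $\C^{\ast}$ all of whose elements have modulus $1$ lies in the unit circle, whereas for a character $\beta$ of the solvable group $G$ with $\beta_{\vert_{N}}=1$ the restriction $\beta_{\vert_{\Gamma}}$ determines $\beta$ on $G/N$ up to the requirement that $\beta$ be continuous on $A=G/N\cong\R^{k}$ — more precisely, $\Gamma/(\Gamma\cap N)$ is a lattice in $A$, so $\beta_{\vert_{\Gamma}}=1$ forces $\beta$ to vanish on a lattice, hence (by continuity and the fact that a nontrivial continuous character of $\R^{k}$ cannot be trivial on a full lattice unless it is trivial) $\beta=1$; and the only other possibility ruled out in $(\Diamond_{2})$ is that $\beta$ is nontrivial but unitary, which is exactly the bad case where $\beta_{\vert_{\Gamma}}$ could vanish while $\beta$ does not. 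So $(\Diamond_{2})\Rightarrow(\Diamond_{1})\Rightarrow$ the conclusion.

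I expect the main obstacle to be making the last implication airtight, i.e. verifying carefully that for $\beta\in\mathcal{A}_{(G,N)}$ the restriction $\beta_{\vert_{\Gamma}}$ is trivial and $\beta$ is \emph{not} unitary is impossible — equivalently that a non-unitary character of $G$ trivial on $N$ cannot be trivial on $\Gamma$. This rests on: (i) $\beta$ factors through $A=G/N\cong\R^{k}$, a character of which has the form $\beta(x)=e^{\ell(x)}$ for a complex-linear functional $\ell$, with $\beta$ unitary iff $\ell$ is purely imaginary; (ii) $\Delta=\Gamma/(\Gamma\cap N)$ is a lattice (cocompact discrete subgroup) in $A$; (iii) if $\beta_{\vert_{\Delta}}=1$ then $\ell$ takes values in $2\pi i\Z$ on $\Delta$, which for a lattice forces the real part of $\ell$ to vanish on a spanning set and hence $\ell$ to be purely imaginary, i.e. $\beta$ unitary. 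Once this is in hand, $(\Diamond_{2})$ reduces to $(\Diamond_{1})$ and the rest is the formal identification of complexes already carried out above. The remaining steps are bookkeeping: matching bases, checking that the differential preserves these subspaces (which is automatic since they are sub-$\Phi(C)$-modules and the differential is $\Phi(C)$-equivariant), and invoking the two previous corollaries.
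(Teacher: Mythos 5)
Your proposal is correct and follows essentially the same route as the paper: under $(\Diamond_{1})$ one identifies $A^{\ast}_{\Gamma}$ with $(\bigwedge\g^{\ast}_{\C}\otimes V_{\rho})^{\Phi(C)}$ and invokes the preceding corollary together with Lemma \ref{incis}, and $(\Diamond_{2})$ is reduced to $(\Diamond_{1})$. The only difference is that the paper merely asserts that $(\Diamond_{2})$ is a special case of $(\Diamond_{1})$, whereas you supply the (correct) justification that a non-unitary character of $G$ trivial on $N$ cannot restrict trivially to $\Gamma$, since $\Gamma/(\Gamma\cap N)$ is a lattice in $G/N$.
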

\begin{proof}
If the condition $(\Diamond_{1})$ holds, then we have $A^{\ast}_{\Gamma}=(\bigwedge\g^{\ast}_{\C}\otimes V_{\rho})^{\Phi(C)}$.
Hence we have
\[H^{\ast}(G/\Gamma,E_{\rho})\cong H(\bigwedge\g^{\ast}_{\C}\otimes V_{\rho})^{\Phi(C)} \cong H^{\ast}(\g, V_{\rho}).\]
The condition $(\Diamond_{2})$ is special case of the condition $(\Diamond_{1})$.
Hence the corollary follows.

\end{proof}
\begin{remark}
For a representation $\rho:G\to GL(V_{\rho})$ such that the restriction $\rho_{\vert_{N}}$ is trivial,
the condition (M) (resp. (H)) in Section 1 is a special case of the condition $(\Diamond_{1})$ (resp $(\Diamond_{2})$)
\end{remark}
\begin{remark}\label{shd}
Let $\frak c$ be the Lie algebra of $C$.
Take a subvector $V\subset {\frak c}$ (not necessarily Lie algebra) such that ${\frak g}=V\oplus\n$.
Then we define the map
\[{\rm ad}_{s}:\g=V\oplus\n\ni A+X\mapsto ({\rm ad}_{A})_{s}\in D(\g)\]
where $({\rm ad}_{A})_{s}$ is the semi-simple part of ${\rm ad}_{A}$ and $D(\g)$ is the Lie algebra of derivations of $\g$.
This map is a Lie algebra homomorphism and a diagonalizable representation  (see \cite{DER} and \cite{K2}).
Let ${\rm Ad}_{s}:G\to {\rm Aut}(\g)$ be the extension of ${\rm ad}_{s}$.
Then this map is identified with the map
\[G=C\cdot N\ni c\cdot n\mapsto ({\rm Ad}_{c}) \in{\rm Aut}(\g).
\]
We define the Lie algebra ${\frak u}_{G}\subset D(\g)\ltimes \g$ as
\[{\frak u}_{G}=\{X-{\rm ad}_{sX}\vert X\in \g\}.\] 
Consider the above basis $\{x_{1},\dots,x_{n}\}$ of $\g^{\ast}_{\C}$.
Then in \cite{K2} the author showed that we have an isomorphism
\[\bigwedge \langle x_{1}\otimes v_{\alpha_{1}},\dots, x_{n}\otimes v_{\alpha_{n}} \rangle\cong \bigwedge({\frak u}_{G}\otimes \C)^{\ast}.\]
(This fact gives the new developments of de Rham homotopy theory on solvmanifolds. See \cite{K2}.)
Hence we can regard 
\[(\bigoplus_{\alpha\in {\mathcal A}_{G, N}}\bigwedge {\frak g}^{\ast}_{\C}\otimes V_{\alpha}\otimes V_{\rho})^{\Phi(C)}=\bigwedge \langle x_{1}\otimes v_{\alpha_{1}},\dots, x_{n}\otimes v_{\alpha_{n}} \rangle \otimes \langle v_{\alpha^{\prime -1}_{1}}\otimes v_{1}, \dots ,v_{\alpha^{\prime -1}_{m}}\otimes v_{m} \rangle\] as the cochain complex of nilpotent  Lie algebra of ${\frak u}_{G}$ with values in some  representation.
\end{remark}

\section{Construction of finite cochain complex (Dolbeault case)}
In this case we can say almost same argument for de Rham case without difficulties.
Let $G$ be a simply connected  solvable Lie group and $\g$ be the Lie algebra of $G$.
Let $N$ be the nilradical  of $G$.
Let $\sigma:G\to GL(V_{\sigma})$ be a holomorphic representation.
Suppose the restriction $\sigma_{\vert_{N}}$ is unipotent.
We consider the direct sum
\[\bigoplus_{\alpha\in {\mathcal B}_{(G,N)}}\bigwedge {\frak g}^{\ast}\otimes V_{\alpha}\otimes V_{\rho}.\]
Then we have the $G$-action on this cochain complex via $\bigoplus {\rm Ad}\otimes \alpha\otimes \rho$.
Consider  the semi-simple part 
\[((\bigoplus_{\alpha\in{\mathcal B}_{(G,N)}} {\rm Ad}\otimes \alpha\otimes \rho)(g))_{s}=\bigoplus_{\alpha\in{\mathcal B}_{(G,N)}} ({\rm Ad}_{g})_{s}\otimes \alpha(g)\otimes (\rho(g))_{s}.
\]
Take a  simply connected complex nilpotent subgroup $C\subset G$ as Proposition \ref{Cde} and Remark \ref{CCO}.
 Since $C$ is nilpotent, the map 
\[\Phi:C\ni c \mapsto \bigoplus_{\alpha\in{\mathcal B}_{(G,N)}} ({\rm Ad}_{g})_{s}\otimes \alpha(g)\otimes (\rho(g))_{s} \in {\rm Aut}(\bigoplus_{\alpha\in{\mathcal B}_{(G,N)}}\bigwedge {\frak g}^{\ast}\otimes V_{\alpha}\otimes V_{\sigma})\]
is a homomorphism.
We denote by 
\[(\bigoplus_{\alpha\in{\mathcal A}_{(G,N)}}\bigwedge {\frak g}^{\ast}\otimes V_{\alpha}\otimes V_{\sigma})^{\Phi(C)}
\]
the subcomplex consisting of the $\Phi(C)$-invariant elements.
By  similar proof of Lemma \ref{incis}, we have:
\begin{lemma}\label{iddd}
The inclusion
\[(\bigoplus_{\alpha\in{\mathcal B}_{(G,N)}}\bigwedge {\frak g}^{\ast}\otimes V_{\alpha}\otimes V_{\sigma})^{\Phi(C)}\subset \bigoplus_{\alpha\in{\mathcal B}_{(G,N)}}\bigwedge {\frak g}^{\ast}\otimes V_{\alpha}\otimes V_{\sigma}
\]
induces a cohomology isomorphism.
\end{lemma}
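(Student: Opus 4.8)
The plan is to follow the proof of Lemma \ref{incis} line by line, merely replacing the real representation $\rho$ and the real adjoint action by the holomorphic representation $\sigma$ and the adjoint action of the complex group $G$; no new idea is needed. First I would note that the $G$-action on $\bigoplus_{\alpha\in{\mathcal B}_{(G,N)}}\bigwedge{\frak g}^{\ast}\otimes V_{\alpha}\otimes V_{\sigma}$ via $\bigoplus{\rm Ad}\otimes\alpha\otimes\sigma$ commutes with the Chevalley--Eilenberg differential and, being an extension of the Lie derivation, induces the trivial action on the cohomology $\bigoplus_{\alpha\in{\mathcal B}_{(G,N)}}H^{\ast}(\g,V_{\alpha}\otimes V_{\sigma})$, since $G$ is connected.

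Since $\Phi$ is by construction the semi-simple part of the restriction to $C$ of $\bigoplus{\rm Ad}\otimes\alpha\otimes\sigma$, the induced $\Phi(C)$-action on cohomology is the semi-simple part of the trivial induced $G$-action, hence itself trivial, and so
\[H^{\ast}\Big(\bigoplus_{\alpha\in{\mathcal B}_{(G,N)}}\bigwedge{\frak g}^{\ast}\otimes V_{\alpha}\otimes V_{\sigma}\Big)^{\Phi(C)}=H^{\ast}\Big(\bigoplus_{\alpha\in{\mathcal B}_{(G,N)}}\bigwedge{\frak g}^{\ast}\otimes V_{\alpha}\otimes V_{\sigma}\Big).\]
On the other hand, by Remark \ref{CCO} the subgroup $C$ may be taken complex, so $\Phi$ is a homomorphism from a simply connected complex nilpotent group into a group of semi-simple automorphisms; by Lemma \ref{jse} it is therefore diagonalizable. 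Consequently, passing to $\Phi(C)$-invariants is exact and commutes with taking cohomology:
\[H^{\ast}\Big(\big(\bigoplus_{\alpha\in{\mathcal B}_{(G,N)}}\bigwedge{\frak g}^{\ast}\otimes V_{\alpha}\otimes V_{\sigma}\big)^{\Phi(C)}\Big)=H^{\ast}\Big(\bigoplus_{\alpha\in{\mathcal B}_{(G,N)}}\bigwedge{\frak g}^{\ast}\otimes V_{\alpha}\otimes V_{\sigma}\Big)^{\Phi(C)}.\]
Combining the two displays shows that the inclusion induces the claimed cohomology isomorphism.

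There is no genuine obstacle here, only bookkeeping: one must check that every structural input used in the de Rham case survives in the holomorphic setting, namely that $C$ (and hence $\Phi$) can be chosen complex (Remark \ref{CCO}), that $\Phi$ is diagonalizable (Lemma \ref{jse} applied to the nilpotent group $C$), and that the Chevalley--Eilenberg complex of $\g$ regarded as a complex Lie algebra still carries a $G$-action that is the identity on cohomology. With these in place the argument is formally identical to that of Lemma \ref{incis}, which is why it suffices to say "by similar proof".
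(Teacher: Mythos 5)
Your proposal is correct and is essentially the paper's own argument: the paper proves Lemma \ref{iddd} simply by citing ``similar proof of Lemma \ref{incis}'', and the two steps you give (triviality of the induced $\Phi(C)$-action on cohomology because it is the semi-simple part of the trivial $G$-action, plus commutation of taking $\Phi(C)$-invariants with cohomology via diagonalizability of $\Phi$) are exactly the steps of that proof. Your explicit check that $C$ may be chosen complex via Remark \ref{CCO} is the only bookkeeping point the transfer requires, and the paper makes the same choice when setting up the Dolbeault case.
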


We have a basis $X_{1},\dots,X_{n}$ of $\g$ such that $({\rm Ad}_{c})_{s}={\rm diag} (\alpha_{1}(c),\dots,\alpha_{n}(c))$ for $c\in C$.
Let $x_{1},\dots, x_{n}$ be the basis of $\g^{\ast}$ which is dual to $X_{1},\dots ,X_{n}$.
We have a basis $v_{1},\dots ,v_{m}$ of $V_{\sigma}$ such that 
$(\sigma(c))_{s}={\rm diag}(\alpha_{1}^{\prime}(c),\dots ,\alpha^{\prime}_{m}(c))$ for any $c\in C$.
Let $v_{\alpha}$ be a basis of $V_{\alpha}$ for each character $\alpha \in{\mathcal B}_{(G,N)}$.
By $G=C\cdot N$, we have $G/N=C/C\cap N$ and hence we have ${\mathcal B}_{(G,N)}={\mathcal B}_{C,C\cap N}=\{\alpha\in {\rm Hom}_{hol}(C,\C^{\ast}) \vert \alpha_{\vert_{C\cap N}}=1\}$.

For a multi-index $I=\{i_{1},\dots ,i_{p}\}$ we write $x_{I}=x_{i_{1}}\wedge\dots \wedge x_{i_{p}}$,  and $\alpha_{I}=\alpha_{i_{1}}\cdots \alpha_{i_{p}}$.
We consider the basis 
\[\{ x_{I} \otimes v_{\alpha}\otimes v_{k}\}_{I\subset \{1,\dots, n\}, \alpha\in {\mathcal A}_{C,C\cap N}, k\in \{1,\dots, m \}}
\]
of $\bigoplus_{\alpha\in {\mathcal B}_{C,C\cap N}}\bigwedge {\frak g}^{\ast}\otimes V_{\alpha}\otimes V_{\sigma}$.
Since the action
\[\Phi:C\to{\rm Aut}(\bigoplus_{\alpha}\bigwedge {\frak g}^{\ast}\otimes V_{\alpha}\otimes V_{\sigma})\]
is  the semi-simple part of $(\bigoplus {\rm Ad}\otimes \alpha\otimes \sigma)_{\vert_C}$, we have
\[
\Phi(a)(x_{I} \otimes v_{\alpha}\otimes v_{k})
=\alpha^{-1}_{I}\alpha \alpha_{k}^{\prime} x_{I}\otimes v_{\alpha}\otimes v_{k}.
\]

Hence we have
\begin{multline*}
(\bigoplus_{\alpha}\bigwedge {\frak g}^{\ast}\otimes V_{\alpha}\otimes V_{\sigma})^{\Phi(C)}\\
=\langle x_{I}\otimes v_{\alpha_{I}\alpha^{\prime-1}_{k}} \otimes v_{k}\rangle_{I\subset \{1,\dots, n\},  k\in \{1,\dots, m \}}\\
=(\bigwedge \langle x_{1}\otimes v_{\alpha_{1}},\dots, x_{n}\otimes v_{\alpha_{n}} \rangle) \otimes \langle v_{\alpha^{\prime -1}_{1}}\otimes v_{1}, \dots ,v_{\alpha^{\prime -1}_{m}}\otimes v_{m}  \rangle.
\end{multline*}

\begin{corollary}\label{MMTT}
Let $B^{\ast}_{\Gamma}$ be the subcomplex of $\langle  x_{I}\otimes v_{ \alpha_{I}\alpha^{\prime-1}_{k}} \otimes v_{k}\rangle_{I\subset \{1,\dots, n\},  k\in \{1,\dots, m \}}$ defined as
\[B^{\ast}_{\Gamma}=\left\langle  x_{I}\otimes v_{\alpha_{I}\alpha^{\prime-1}_{k}} \otimes v_{k}\vert \left(\frac{\bar\alpha_{I}\bar\alpha^{\prime-1}_{k}}{\alpha_{I}\alpha^{\prime-1}_{k}}\right)_{ \vert_{\Gamma}}=1\right\rangle.
\]
Then we have an isomorphism 
\[H^{\ast}(B^{\ast}_{\Gamma})\cong H^{0,\ast}(G/\Gamma,  L_{\bar\sigma}).
\]
\end{corollary}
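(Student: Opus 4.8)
The plan is to imitate, in the holomorphic category, the proof of the de Rham corollary of Section~\ref{cos}, with Theorem~\ref{doll} and Lemma~\ref{iddd} playing the roles of Theorem~\ref{drr} and Lemma~\ref{incis}. First I would record that, combining Lemma~\ref{iddd} with Theorem~\ref{doll}, the inclusion
\[
\iota\colon \Bigl(\bigoplus_{\alpha\in{\mathcal B}_{(G,N)}}\bigwedge\g^{\ast}\otimes V_{\alpha}\otimes V_{\sigma}\Bigr)^{\Phi(C)}\longrightarrow \bigoplus_{L_{\bar\alpha}\in{\mathcal B}_{(G,N)}(\Gamma)}A^{0,\ast}(G/\Gamma,L_{\bar\alpha}\otimes L_{\bar\sigma})
\]
induces an isomorphism on cohomology. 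By the explicit basis computed just above, the domain of $\iota$ is spanned by the elements $x_{I}\otimes v_{\alpha_{I}\alpha^{\prime-1}_{k}}\otimes v_{k}$; writing $\beta=\alpha_{I}\alpha^{\prime-1}_{k}$, each $\alpha^{\prime}_{k}$ and each $\alpha_{i}$ is trivial on $C\cap N$ (because $\sigma_{\vert_{N}}$ and ${\rm Ad}_{\vert_{N}}$ are unipotent), so $\beta\in{\mathcal B}_{(G,N)}$, and $\iota$ carries $x_{I}\otimes v_{\beta}\otimes v_{k}$ into the single summand $A^{0,\ast}(G/\Gamma,L_{\bar\beta}\otimes L_{\bar\sigma})$.

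The decisive, and I expect the most delicate, step is to determine exactly when this summand coincides with the distinguished one $A^{0,\ast}(G/\Gamma,L_{\bar\sigma})$, i.e.\ when $L_{\bar\beta}$ is holomorphically trivial over $G/\Gamma$. Here passing from the de Rham to the Dolbeault setting genuinely changes the answer: $L_{\bar\beta}$ is trivial \emph{as a flat bundle} only when $\beta_{\vert_{\Gamma}}=1$, but, $\beta$ being a holomorphic character, the nowhere-vanishing holomorphic function $g\mapsto\beta(g)$ on $G$ descends to a holomorphic bundle isomorphism of $L_{\bar\beta}$ onto the flat holomorphic line bundle associated with the \emph{unitary} character $\bar\beta/\beta$ of $G$; and a flat holomorphic line bundle attached to a unitary character is holomorphically trivial precisely when its restriction to $\Gamma$ is trivial (this is the rank-one fact underlying Lemma~\ref{aaag}, cf.~\cite{Kd}: a non-trivial such bundle has vanishing $H^{0,0}$, hence no holomorphic section). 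Consequently $L_{\bar\beta}$ is holomorphically trivial if and only if $\bar\beta_{\vert_{\Gamma}}=\beta_{\vert_{\Gamma}}$, that is, if and only if $\bigl(\tfrac{\bar\alpha_{I}\bar\alpha^{\prime-1}_{k}}{\alpha_{I}\alpha^{\prime-1}_{k}}\bigr)_{\vert_{\Gamma}}=1$. Tracking the isomorphism $g\mapsto\beta(g)$ through the definition of $\iota$ shows equivalently that $\iota(x_{I}\otimes v_{\beta}\otimes v_{k})$ becomes a genuine smooth $L_{\bar\sigma}$-valued $(0,q)$-form on $G/\Gamma$ exactly when the $\Gamma$-equivariance factor $(\bar\beta/\beta)\bar\sigma$ coincides with $\bar\sigma$ — the same condition — and that no element with $(\bar\beta/\beta)_{\vert_{\Gamma}}\neq 1$ lies in that summand. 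Hence $\iota^{-1}\bigl(A^{0,\ast}(G/\Gamma,L_{\bar\sigma})\bigr)=B^{\ast}_{\Gamma}$.

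Finally I would conclude exactly as in the de Rham corollary and in Corollary~\ref{Compu}: $\iota$ respects the direct-sum decomposition of its domain by the character $\beta$ and of its target by holomorphic isomorphism class of bundle, so it splits as a block-diagonal chain map for the decomposition into the $A^{0,\ast}(G/\Gamma,L_{\bar\sigma})$-summand and its complement. Since a block-diagonal chain map is a quasi-isomorphism if and only if each block is, and $\iota$ is a quasi-isomorphism, its restriction $\iota_{\vert_{B^{\ast}_{\Gamma}}}\colon B^{\ast}_{\Gamma}\to A^{0,\ast}(G/\Gamma,L_{\bar\sigma})$ is a quasi-isomorphism, whence $H^{\ast}(B^{\ast}_{\Gamma})\cong H^{0,\ast}(G/\Gamma,L_{\bar\sigma})$; since $G/\Gamma$ is complex parallelizable this also yields $\bigwedge\C^{\dim G}\otimes H^{\ast}(B^{\ast}_{\Gamma})\cong H^{\ast,\ast}(G/\Gamma,L_{\bar\sigma})$. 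The only place where real care is required is the compatibility assertion in the second paragraph — unwinding how $\iota$, built from Sakane's theorem and Borel's spectral-sequence computation (\cite{Sak},\cite{Hir}), interacts with the holomorphic trivialization by $\beta$ — together with the ``only if'' direction of the triviality criterion; everything else is the formal shadow of the de Rham argument.
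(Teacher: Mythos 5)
Your proposal is correct and follows essentially the same route as the paper: identify each basis element's target summand by the character $\beta=\alpha_{I}\alpha^{\prime-1}_{k}$, reduce the holomorphic triviality of $L_{\bar\beta}$ to the triviality of the unitary character $(\bar\beta/\beta)_{\vert_{\Gamma}}$, conclude $\iota^{-1}(A^{0,\ast}(G/\Gamma,L_{\bar\sigma}))=B^{\ast}_{\Gamma}$, and restrict the quasi-isomorphism summand by summand. The only cosmetic difference is that the paper justifies the triviality criterion by citing the classification of flat holomorphic line bundles on a complex torus by unitary characters of the lattice (\cite{Po}), whereas you derive it directly from the trivialization by the nowhere-vanishing function $\beta$ and the absence of holomorphic sections for non-trivial unitary flat line bundles.
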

\begin{proof}
It is known that we have the $1$-$1$ correspondence between the isomorphism classes of flat holomorphic line bundles over a complex torus  and the unitary characters of its lattice (see \cite{Po}).
By this, for $\alpha \in {\mathcal B}_{(G,N)}$, considering the unitary character $\frac{\bar\alpha}{\alpha}$, the holomorphic line bundle $L_{\bar\alpha}$ is trivial if and only if the restriction $(\frac{\bar\alpha}{\alpha})_{\vert_{\Gamma}}$ is trivial.
Hence 
\[\iota(x_{I}\otimes v_{\alpha_{I}\alpha^{\prime-1}_{k}} \otimes v_{k})\in A^{\ast}(G/\Gamma,  L_{\bar\sigma})\]
 if and only if the restriction $(\frac{\bar\alpha_{I}\bar\alpha^{\prime-1}_{k}}{\alpha_{I}\alpha^{\prime-1}_{k}})_{\vert_{\Gamma}}$   is trivial. 
Then we have $\iota^{-1}(A^{\ast}(G/\Gamma,  L_{\bar\sigma}))=B^{\ast}_{\Gamma}$.
\end{proof}

\begin{corollary}\label{IMG}
We consider the following condition:

$(\star)$ For each multi-index $I=\{i_{1},\dots ,i_{p}\}$ and $k\in \{1,\dots ,m\}$, the character $\alpha_{I}\alpha^{\prime-1}_{k}$ is trivial if and only if the restriction $(\frac{\bar\alpha_{I}\bar\alpha^{\prime-1}_{k}}{\alpha_{I}\alpha^{\prime-1}_{k}})_{\vert_{\Gamma}}$ is trivial.

If the condition $(\star)$ holds, then we have an isomorphism
\[H^{\ast}(\g, V_{\sigma})\cong H^{0,\ast}(G/\Gamma, L_{\bar\sigma}).
\]

\end{corollary}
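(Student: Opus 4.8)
The plan is to deduce the isomorphism from Corollary \ref{MMTT}, in exactly the way the de Rham corollary with conditions $(\Diamond_{1})$, $(\Diamond_{2})$ is deduced from its explicit finite model; the one new input is that condition $(\star)$ collapses the defining condition of the finite complex $B^{\ast}_{\Gamma}$ to the single condition that $\alpha_{I}\alpha^{\prime-1}_{k}$ be trivial. First I would recall that by Corollary \ref{MMTT} there is an isomorphism
\[
H^{\ast}(B^{\ast}_{\Gamma})\cong H^{0,\ast}(G/\Gamma, L_{\bar\sigma}),
\]
where $B^{\ast}_{\Gamma}$ is the span of those generators $x_{I}\otimes v_{\alpha_{I}\alpha^{\prime-1}_{k}}\otimes v_{k}$ of $(\bigoplus_{\alpha}\bigwedge \g^{\ast}\otimes V_{\alpha}\otimes V_{\sigma})^{\Phi(C)}$ for which the restriction $(\frac{\bar\alpha_{I}\bar\alpha^{\prime-1}_{k}}{\alpha_{I}\alpha^{\prime-1}_{k}})_{\vert_{\Gamma}}$ is trivial.

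Next I would observe that if $\alpha_{I}\alpha^{\prime-1}_{k}$ is trivial then $(\frac{\bar\alpha_{I}\bar\alpha^{\prime-1}_{k}}{\alpha_{I}\alpha^{\prime-1}_{k}})_{\vert_{\Gamma}}$ is automatically trivial, so under $(\star)$ the generators lying in $B^{\ast}_{\Gamma}$ are precisely those with $\alpha_{I}\alpha^{\prime-1}_{k}$ trivial. For such a generator the $1$-dimensional module $V_{\alpha_{I}\alpha^{\prime-1}_{k}}$ is trivial and $x_{I}\otimes v_{\alpha_{I}\alpha^{\prime-1}_{k}}\otimes v_{k}$ is just the element $x_{I}\otimes v_{k}$ sitting in the $\alpha=1$ summand $\bigwedge \g^{\ast}\otimes V_{\sigma}$; using the explicit presentation of $(\bigoplus_{\alpha}\bigwedge \g^{\ast}\otimes V_{\alpha}\otimes V_{\sigma})^{\Phi(C)}$ established just before Corollary \ref{MMTT}, this identifies
\[
B^{\ast}_{\Gamma}=(\bigwedge \g^{\ast}\otimes V_{\sigma})^{\Phi(C)}.
\]
It then remains to compute the cohomology of the right-hand side, which I would do by rerunning the proof of Lemma \ref{iddd} (equivalently of Lemma \ref{incis}) with the single summand $\bigwedge \g^{\ast}\otimes V_{\sigma}$ in place of the whole direct sum: the $G$-action on $\bigwedge \g^{\ast}\otimes V_{\sigma}$ via ${\rm Ad}\otimes\sigma$ extends the Lie derivative and hence acts trivially on $H^{\ast}(\g, V_{\sigma})$, while the restriction of $\Phi$ to this summand is the semisimple part of that $G$-action and is diagonalizable, so taking $\Phi(C)$-invariants commutes with passing to cohomology and the induced $\Phi(C)$-action there is trivial. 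This yields
\[
H^{\ast}((\bigwedge \g^{\ast}\otimes V_{\sigma})^{\Phi(C)})\cong H^{\ast}(\g, V_{\sigma}),
\]
and combining the three displays gives $H^{\ast}(\g, V_{\sigma})\cong H^{0,\ast}(G/\Gamma, L_{\bar\sigma})$.

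The only point needing care is the middle step: $(\star)$ must be used in its nontrivial direction, namely that triviality of $(\frac{\bar\alpha_{I}\bar\alpha^{\prime-1}_{k}}{\alpha_{I}\alpha^{\prime-1}_{k}})_{\vert_{\Gamma}}$ forces $\alpha_{I}\alpha^{\prime-1}_{k}$ itself to be trivial, and one must note that the passage from the generator $x_{I}\otimes v_{\alpha_{I}\alpha^{\prime-1}_{k}}\otimes v_{k}$ with trivial character to $x_{I}\otimes v_{k}\in\bigwedge \g^{\ast}\otimes V_{\sigma}$ is compatible with the differentials. Both are immediate from the explicit description of the $\Phi(C)$-invariant subcomplex already recorded, so I do not anticipate a genuine obstacle: the substance of the statement lies in Corollary \ref{MMTT} (hence in Theorem \ref{doll} and Proposition \ref{abis}), and $(\star)$ merely trims the resulting finite complex down to the left-invariant complex $\bigwedge \g^{\ast}\otimes V_{\sigma}$.
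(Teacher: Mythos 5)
Your proposal is correct and follows the paper's own argument: the paper likewise uses $(\star)$ to identify $B^{\ast}_{\Gamma}$ with $(\bigwedge \g^{\ast}\otimes V_{\sigma})^{\Phi(C)}$ and then combines Corollary \ref{MMTT} with the invariant-subcomplex computation $H^{\ast}((\bigwedge \g^{\ast}\otimes V_{\sigma})^{\Phi(C)})\cong H^{\ast}(\g, V_{\sigma})$. The only difference is that you spell out the last step (rerunning Lemma \ref{iddd} on the single summand), which the paper leaves implicit.
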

\begin{proof}
Suppose  the condition $(\star)$  holds.
Then we have $B^{\ast}_{\Gamma}= (\bigwedge \g^{\ast}\otimes V_{\sigma})^{\Phi(C)}$.
Hence we have
\[H^{0,\ast}(G/\Gamma,L_{\bar\sigma})\cong H ^{\ast}(\bigwedge \g^{\ast}\otimes V_{\sigma})^{\Phi(C)}\cong H^{\ast}(\g, V_{\sigma}).
\]

\end{proof}

\begin{remark}

We define the nilpotent Lie algebra ${\frak u}_{G}$ as Remark
\ref{shd}.
In complex case, ${\frak u}_{G}$ is also a complex Lie algebra.
As similar to Remark
\ref{shd}, we have
\[
(\bigoplus_{\alpha}\bigwedge {\frak g}^{\ast}\otimes V_{\alpha})^{\Phi(C)}
=\bigwedge \langle x_{1}\otimes v_{\alpha_{1}},\dots, x_{n}\otimes v_{\alpha_{n}} \rangle\cong \bigwedge{\frak u}_{G}^{\ast}.
\]

Suppose $G$ has a lattice $\Gamma$.
We consider the cochain complex
\[B^{\ast}_{\Gamma}=\left\langle  x_{I}\otimes v_{\alpha_{I}} \vert \left(\frac{\bar\alpha_{I}}{\alpha_{I}}\right)_{ \vert_{\Gamma}}=1\right\rangle.
\]
Then we have an isomorphism $H^{0,\ast}(B^{\ast}_{\Gamma})\cong H^{0,\ast}(G/\Gamma)$ by Corollary \ref{MMTT}.
We consider the following condition.

$(\Box)$ For each $1\le i\le n$, the restriction $(\frac{\bar\alpha_{i}}{\alpha_{i}})_{ \vert_{\Gamma}}$  is trivial.\\
If the condition $(\Box)$ holds, then we have
\[B^{\ast}_{\Gamma}=\bigwedge \langle x_{1}\otimes v_{\alpha_{1}},\dots, x_{n}\otimes v_{\alpha_{n}} \rangle\cong \bigwedge{\frak u}_{G}^{\ast}.\]

Let $U_{G}$ be the simply connected complex Lie group with the Lie algebra ${\frak u}_{G}$.
Then $U_{G}$ is the nilradical of the semi-simple splitting of $G$ (see \cite{dek}).
It is known that if $G$ has a lattice, then   $U_{G}$  has a lattice $\Gamma^{\prime}$ (see \cite{Aus}).

Hence we have:
\begin{corollary}\label{NIM}
Let $G$ be a simply connected complex solvable Lie group with a lattice $\Gamma$.
If the condition $(\Box)$ holds, then there exists a complex parallelizable nilmanifold $U_{G}/\Gamma^{\prime}$ such that we have an isomorphism
\[H^{\ast,\ast}(G/\Gamma)\cong H^{\ast,\ast}(U_{G}/\Gamma^{\prime}).
\]
\end{corollary}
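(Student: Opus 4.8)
The plan is to chain together two already-established isomorphisms. First, under the hypothesis $(\Box)$ we have, by the computation preceding the corollary (specifically Corollary \ref{MMTT} applied with $\sigma$ trivial, together with the identification $B^{\ast}_{\Gamma}=\bigwedge\langle x_{1}\otimes v_{\alpha_{1}},\dots,x_{n}\otimes v_{\alpha_{n}}\rangle\cong\bigwedge{\frak u}_{G}^{\ast}$ that $(\Box)$ forces), an isomorphism $H^{0,\ast}(G/\Gamma)\cong H^{\ast}({\frak u}_{G})$, the Lie algebra cohomology of ${\frak u}_{G}$ over $\C$. So the $(0,\ast)$-Dolbeault cohomology of $G/\Gamma$ is computed by the Chevalley--Eilenberg complex of the complex nilpotent Lie algebra ${\frak u}_{G}$.

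Next I would invoke the two facts cited just before the corollary: $U_{G}$, the simply connected complex nilpotent Lie group with Lie algebra ${\frak u}_{G}$, is the nilradical of the semi-simple splitting of $G$ (\cite{dek}), and it admits a lattice $\Gamma^{\prime}$ whenever $G$ admits a lattice (\cite{Aus}). Since $U_{G}/\Gamma^{\prime}$ is a complex parallelizable nilmanifold, Theorem \ref{dolni} (in its trivial-coefficient form, i.e.\ the complex-parallelizable nilmanifold case) gives $H^{0,\ast}_{\bar\partial}(U_{G}/\Gamma^{\prime})\cong H^{\ast}({\frak u}_{G})$, and moreover $\bigwedge\C^{\dim U_{G}}\otimes H^{\ast}({\frak u}_{G})\cong H^{\ast,\ast}_{\bar\partial}(U_{G}/\Gamma^{\prime})$. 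Combining with the previous paragraph yields $H^{0,\ast}(G/\Gamma)\cong H^{0,\ast}(U_{G}/\Gamma^{\prime})$. Finally, since $G/\Gamma$ is itself complex parallelizable and $\dim_{\C}G=\dim_{\C}U_{G}=n$, Corollary \ref{Compu} gives $\bigwedge\C^{n}\otimes H^{0,\ast}(G/\Gamma)\cong H^{\ast,\ast}(G/\Gamma)$, so tensoring the isomorphism of $(0,\ast)$-cohomologies with $\bigwedge\C^{n}$ produces the desired $H^{\ast,\ast}(G/\Gamma)\cong H^{\ast,\ast}(U_{G}/\Gamma^{\prime})$.

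The only genuinely delicate point is the bookkeeping that matches the dimensions and the holomorphic-cotangent factors on the two sides: one must check that $\dim_{\C}G=\dim_{\C}U_{G}$ so that the $\bigwedge\C^{n}$ factors agree, and that the identification $B^{\ast}_{\Gamma}\cong\bigwedge{\frak u}_{G}^{\ast}$ under $(\Box)$ is an isomorphism of cochain complexes (not merely of graded vector spaces), which is exactly what Remark \ref{shd} and \cite{K2} provide via $\bigwedge\langle x_{1}\otimes v_{\alpha_{1}},\dots,x_{n}\otimes v_{\alpha_{n}}\rangle\cong\bigwedge({\frak u}_{G}\otimes\C)^{\ast}$. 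Everything else is a formal concatenation of results already in hand, so the proof is short once these compatibilities are noted.
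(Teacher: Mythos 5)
Your proposal is correct and follows the argument the paper intends (the paper leaves the proof of Corollary \ref{NIM} implicit after the preceding remark): under $(\Box)$ you identify $B^{\ast}_{\Gamma}$ with $\bigwedge{\frak u}_{G}^{\ast}$ as a cochain complex via Remark \ref{shd}, apply Corollary \ref{MMTT} with trivial coefficients on one side and Theorem \ref{dolni} on the nilmanifold $U_{G}/\Gamma^{\prime}$ on the other, and then tensor with $\bigwedge\C^{n}$ using complex parallelizability of both $G/\Gamma$ and $U_{G}/\Gamma^{\prime}$. The dimension match $\dim_{\C}U_{G}=\dim_{\C}G$ that you flag is immediate since ${\frak u}_{G}=\{X-{\rm ad}_{sX}\mid X\in\g\}$ is the graph of a linear map on $\g$, so there is no gap.
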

By this corollary we have some solvmanifolds whose Dolbeault cohomology is isomorphic to the Dolbeault cohomology of nilmanifolds.
\end{remark}

\section{example}
Let $G=\C\ltimes_{\phi} \C^{2}$ such that \[\phi(z)=\left(
\begin{array}{cc}
e^{z}& 0  \\
0&    e^{-z}  
\end{array}
\right).\]
Then we have $a+\sqrt{-1}b, c+\sqrt{-1}d\in \C$ such that $ \Z(a+\sqrt{-1}b)+\Z(c+\sqrt{-1}d)$ is a lattice in $\C$ and
$\phi(a+\sqrt{-1}b)$ and $\phi(c+\sqrt{-1}d)$
 are conjugate to elements of $SL(4,\Z)$ where we regard  $SL(2,\C)\subset SL(4,\R)$ (see \cite{Hd}).
Hence we have a lattice $\Gamma=(\Z(a+\sqrt{-1}b)+\Z( c+\sqrt{-1}d))\ltimes_{\phi} \Gamma^{\prime\prime}$ such that $\Gamma^{\prime\prime}$ is a lattice of $\C^{2}$.

\subsection{Twisted de Rham cohomology $H^{1}(G/\Gamma, E_{\rm Ad})$}
For a coordinate $(w,z_{1},z_{2})\in \C\ltimes_{\phi} \C^{2}$ we have the basis $\{v_{1},\dots ,v_{6}\}$ of $\g_{\C}$ such that
\[v_{1}=e^{w}\frac{\partial}{\partial z_{1}}, v_{2}=e^{\bar w}\frac{\partial}{\partial \bar z_{1}},v_{3}=e^{- w}\frac{\partial}{\partial  z_{2}}, v_{4}=e^{-\bar w}\frac{\partial}{\partial \bar z_{2}}, v_{5}=\frac{\partial}{\partial  w}, v_{6}=\frac{\partial}{\partial \bar w}.
\]
Consider the dual basis
\[e^{-w}dz_{1},e^{-\bar w} d\bar z_{1}, e^{w}dz_{2}, e^{\bar w} d\bar z_{2}, dw, d\bar w.
\]
As we consider $\g_{\C}$ as a representation of $\g$ via $\rm Ad$, we have the cochian complex 
$\bigwedge \g^{\ast}\otimes \g_{\C}$ whose differential is given by
\[dv_{1}=dw\otimes v_{1},\, dv_{2}=d\bar w \otimes v_{2},\, dv_{3}=-dw\otimes v_{3}, \,dv_{4}=-d\bar w\otimes v_{4}
\]
\[dv_{5}=-e^{-w}dz_{1}\otimes v_{1}+e^{w}dz_{2}\otimes v_{3},\, dv_{6}=-e^{\bar w}d\bar z_{1}\otimes v_{2}+e^{\bar w}d\bar z_{2}\otimes v_{4}.\]
For $(w,0,0)\in \C$, we have $({\rm Ad}_{(w,0,0)})_{s}={\rm diag} (e^{w},e^{\bar w}, e^{-w},e^{-\bar w},1,1)$ for the basis $\{v_{1},\dots ,v_{6}\}$.
Consider the cochain complex 
\[(\bigoplus_{\alpha}\bigwedge {\frak g}^{\ast}\otimes V_{\alpha}\otimes V_{\rho})^{\Phi(C)} 
\]
as Section \ref{cos} where $C=\C$.
Then we have
\begin{multline*}
(\bigoplus_{\alpha}\bigwedge {\frak g}^{\ast}\otimes V_{\alpha}\otimes V_{\rho})^{\Phi(C)}\\
=\bigwedge \langle ^{-w}dz_{1}\otimes v_{e^{w}} ,\, e^{-\bar w} d\bar z_{1}\otimes v_{e^{\bar w}}, \, e^{w}dz_{2}\otimes v_{e^{- w}},\, e^{\bar w} d\bar z_{2}\otimes v_{e^{-\bar w}},\, dw,\, d\bar w \rangle\\
\bigotimes  \langle v_{1}\otimes v_{e^{- w}},\, v_{2}\otimes v_{e^{-\bar w}},v_{3} \otimes v_{e^{w}},\, v_{4}\otimes v_{e^{\bar w}} ,\, v_{5},\, v_{6}\rangle.
\end{multline*}

For any lattice $\Gamma$ we have $b_{1}(G/\Gamma)=b_{1}(\g)=2$.
But we will see that  $\dim H^{1}(G/\Gamma, e_{\rm Ad})$ varies for a choice of $\Gamma$.
If $b,d \in \pi\Z$, then we have 
\[A^{0}_{\Gamma}=\langle v_{5}, v_{6}\rangle,
\]
\begin{multline*}
A^{1}_{\Gamma}=\langle e^{-w}dz_{1}\otimes v_{1},\, e^{-w}dz_{1}\otimes v_{e^{w}}\otimes v_{2}\otimes v_{e^{-\bar w}},\\
 e^{-\bar w}d\bar z_{1}\otimes v_{e^{\bar w}}\otimes v_{1}\otimes v_{e^{ w}},\,  e^{-\bar w}d\bar z_{1}\otimes v_{2},\\
e^{w}dz_{2}\otimes v_{3},\,  e^{w}dz_{2}\otimes v_{e^{-w}}\otimes v_{4}\otimes v_{e^{\bar w}},\\
e^{\bar w}d\bar z_{2}\otimes v_{e^{-\bar w}}\otimes v_{3}\otimes v_{e^{ -w}},\,  e^{\bar w}d\bar z_{2}\otimes v_{4},\\
dw\otimes v_{5},\, dw\otimes v_{6},\, d\bar w\otimes v_{5},\,  d\bar w\otimes v_{6}\rangle.
\end{multline*}
Hence we have $\dim H^{1}(G/\Gamma, V_{\rm Ad})=\dim H^{1}(A_{\Gamma}^{\ast})=6$.

On the other hand, if $b\not \in \pi\Z$ or $d\not \in\pi\Z$, then we have
\[A^{0}_{\Gamma}=\langle v_{5}, v_{6}\rangle,
\]
\begin{multline*}
A^{1}_{\Gamma}=\langle e^{-w}dz_{1}\otimes v_{1},\,
  e^{-\bar w}d\bar z_{1}\otimes v_{2},\,
e^{w}dz_{2}\otimes v_{3}, \,
 e^{\bar w}d\bar z_{2}\otimes v_{4},\\
dw\otimes v_{5},\, dw\otimes v_{6},\, d\bar w\otimes v_{5},\, d\bar w\otimes v_{6}\rangle.
\end{multline*}
Hence we have $\dim H^{1}(G/\Gamma, E_{\rm Ad})=\dim H^{1}(A_{\Gamma}^{\ast})=2$.

\subsection{Dolbeault cohomology $H^{\ast,\ast}_{\bar\partial}(G/\Gamma)$}
For a coordinate $(z_{1},z_{2},z_{3})\in \C\ltimes_{\phi} \C^{2}$, we consider the basis $(x_{1},x_{2},x_{3})=(d\bar z_{1},e^{-\bar z_{1}}d\bar z_{2},e^{\bar z_{1}}d\bar z_{3})$ of $\g^{\ast}$.
We consider  $C=\C=\{(z_{1})\}$ and $(\alpha_{1},\alpha_{2},\alpha_{3})=(1,e^{z_1},e^{-z_{1}})$ for $C$ and $(\alpha_{1},\alpha_{2},\alpha_{3})$ as in Section \ref{cos}.
If $b\not \in \pi\Z$ or $c\not \in\pi\Z$, then $(\star)$ holds and hence we have $H^{\ast,\ast}_{\bar\partial}(G/\Gamma)\cong \bigwedge\C^{3}\otimes H^{\ast}(\g)$.
If $b,d \in \pi\Z$, then the condition $(\Box)$ holds and hence we have $H^{\ast,\ast}_{\bar\partial}(G/\Gamma)\cong \bigwedge\C^{3}\otimes  \bigwedge\C^{3}$.
There exists a lattice $\Gamma$ which satisifies the condition $(\star)$ or $(\Box)$ (see \cite{Hd}).

{\bf  Acknowledgements.} 

The author would like to express his gratitude to   Toshitake Kohno for helpful suggestions and stimulating discussions.
This research is supported by JSPS Research Fellowships for Young Scientists.

\end{document}